\newtheorem{dfn}{Definition}[section]
\newtheorem{thm}[dfn]{Theorem}
\newtheorem{lem}[dfn]{Lemma}
\newcommand{\N}{\mathbb{N}}
\newcommand{\R}{\mathbb{R}}
\newcommand{\A}{\mathcal{A}}
\newcommand{\Hom}{\mathrm{Hom}}
\newcommand{\I}{\hbox{I}}
\newcommand{\II}{\hbox{II}}
\newcommand{\III}{\hbox{III}}
\newcommand{\IV}{\hbox{IV}}
\def\Xint#1{\mathchoice
 {\XXint\displaystyle\textstyle{#1}}%
 {\XXint\textstyle\scriptstyle{#1}}%
 {\XXint\scriptstyle\scriptscriptstyle{#1}}%
 {\XXint\scriptscriptstyle\scriptscriptstyle{#1}}%
 \!\int} \def\XXint#1#2#3{{\setbox0=\hbox{$#1{#2#3}{\int}$}
 \vcenter{\hbox{$#2#3$}}\kern-.5\wd0}} 
\def\dashint{\Xint{-}}
 \def\@maketitle{%
\begin{flushright}%
{\large \@date}%
\end{flushright}%
\par\vskip 1.5em
\begin{center}%
{\LARGE \@title \par}%
\end{center}%
\begin{center}%
{\large \@author}%
\end{center}%
\par\vskip 1.5em
 }
\title{
Partial regularity result of elliptic systems with Dini continuous coefficients and $q$-growth}
\author{Taku Kanazawa \\
Graduate School of Mathematics \\
Nagoya University, JAPAN
}
\date{Ver. 5 July 2013
}
\begin{document}

\maketitle

\begin{center}
\begin{minipage}[t]{10cm}
\small{
\noindent \textbf{Abstract.}
We establish partial regularity result for vector-valued solutions $u:\Omega\to\R^N$ to second order elliptic systems
of the type:
\[
 -\mathrm{div}(A(x,u,Du))=f(x,u,Du) \qquad \mathrm{in}\>\Omega ,
\]
where the coefficients $A:\Omega\times\R^N\times\Hom(\R^n,\R^N)\to\Hom(\R^n,\R^N)$ satisfies Dini condition respect to 
$(x,u)$ with growth order $q\geq 2$. We prove $C^1$-regularity of the solutions outside of singular sets.
\medskip

\noindent \textbf{Keywords.} Nonlinear elliptic systems, Partial regularity, Dini condition, $\A$-harmonic approximation.
\medskip

\noindent \textbf{Mathematics~Subject~Classification~(2010):}
35J60, 35B65.

}
\end{minipage}
\end{center}
\section{Introduction}
In this paper, we consider the second order nonlinear elliptic systems in divergence form of the following type:
\begin{equation}
 -\mathrm{div}(A(x,u,Du))=f(x,u,Du) \qquad \text{in}\>\Omega .
 \label{system}
\end{equation}
Here, $\Omega$ is a bounded domain in $\R^n$, $u$ takes values in $\R^N$ with coefficients 
$A:\Omega\times\R^N\times\Hom(\R^n,\R^N)\to\Hom(\R^n,\R^N)$. 

The regularity theory with the growth of $A(x,\xi,p)$ with respect to $p$ has been proved by Giaquinta and Modica \cite{GMo2}. 
They proved that weak solutions of \eqref{system} has H\"{o}lder continuous first derivatives outside of a singular set of 
Lebesgue measure zero if $(1+\lvert p \rvert)^{-1}A(x,\xi,p)$ is H\"{o}lder continuous in variables $(x,\xi)$ uniformly with respect to $p$. 
In \cite{DG}, Duzaar and Grotowski gave a simplified proof of their result without 
$L^q$-$L^2$ estimates for $Du$. The method of proof also gives the optimal result in one step,
i.e. if $(1+\lvert p \rvert)^{-1}A(x,\xi,p)$ is in $C^{0,\alpha}$ for some $0<\alpha<1$ in $(x,\xi)$ then $u$ is in 
$C^{1,\alpha}$ outside of the singular set. The essential feature is the use of the $\A$-harmonic approximation lemma
(cf. \cite[Lemma 2.1]{DG}; see also Lemma \ref{A-harm}). 

Duzaar and Gastel \cite{DGa} prove under weaker assumptions on $A(x,\xi,p)$ with respect to continuity in the variables
$(x,\xi)$. More precisely, they assume for the continuity of $A(x,\xi,p)$ with respect to the variables $(x,\xi)$ that 
\begin{equation}
 \lvert A(x,\xi,p)-A(\widetilde{x},\widetilde{\xi},p)\rvert 
 \leq \kappa (\lvert\xi\rvert)\mu\left( \lvert x-\widetilde{x}\rvert +\lvert\xi-\widetilde{\xi}\rvert\right)(1+\lvert p \rvert), 
 \label{DGaassumption}
\end{equation}
for all $x,\widetilde{x}\in\Omega$, $\xi,\widetilde{\xi}\in\R^N$, $p\in\Hom(\R^n,\R^N)$, where 
$\kappa :[0,\infty)\to[1,\infty)$ is nondecreasing, and $\mu:(0,\infty)\to[0,\infty)$ is nondecreasing and
concave with $\mu(0+)=0$. They also have to require that $r\mapsto r^{-\alpha}\mu(r)$ is nonincreasing for 
some $0<\alpha<1$, and that 
\begin{equation}
 F(r)=\int_0^r\frac{\mu(\rho)}{\rho}d\rho<\infty \quad \text{for some }r>0. \label{originDini}
\end{equation}
They conclude that a bounded weak solution of elliptic system \eqref{system} satisfying \eqref{DGaassumption} and 
(\ref{originDini}) is in $C^1$ outside a closed singular set with Lebesgue measure zero. 

The condition \eqref{originDini} is called Dini condition in the literature, although Dini himself \cite{Di} used 
a slightly weaker conditions a century ago. It had some significance for the theory of linear elliptic partial 
differential equations in the first half of the century, cf. \cite{HW}. 

Qiu \cite{Qiu} extend the result in \cite{DGa}, which is the result under quadratic growth condition, 
to the subquadratic case. In this case, the assumptions \eqref{DGaassumption} and \eqref{originDini} are modified as 
\[
 \lvert A(x,\xi,p)-A(\widetilde{x},\widetilde{\xi},p)\rvert 
 \leq \kappa (\lvert\xi\rvert) \mu\left( \lvert x-\widetilde{x}\rvert^q +\lvert\xi-\widetilde{\xi}\rvert^q\right)
 (1+\lvert p \rvert)^{-2/q},
\]
and 
\[
 F(r)=\int_0^r\frac{\sqrt{\mu(\rho)}}{\rho}d\rho<\infty \quad \text{for some }r>0,
\]
where $1<q<2$. 

In this paper, we consider the regularity theory in case of superquadratic, i.e. $q\geq 2$. Thus, we assume the continuity of 
$A(x,\xi,p)$ with respect to the variables $(x,\xi)$ that 
\[
 \lvert A(x,\xi,p)-A(\widetilde{x},\widetilde{\xi},p)\rvert 
 \leq \kappa (\lvert\xi\rvert) \mu\left( \lvert x-\widetilde{x}\rvert +\lvert\xi-\widetilde{\xi}\rvert\right)
 (1+\lvert p \rvert)^{q-1},
\]
and to obtain the regularity result, we assume the modified Dini condition such that 
\[
 F(r)=\int_0^r\frac{\mu^\beta(\rho)}{\rho}d\rho<\infty \quad \text{for some}\ r>0\ \text{and}\ \beta\in \left( 0,1\right].
\]
Under these assumptions and $q$-growth condition for 
inhomogeneous term, we obtain that a bounded weak solution of \eqref{system} is $C^1$(see Theorem \ref{pr}). 

Our result is different from the result of Qiu \cite{Qiu2}. The main difference is the version of $\A$-harmonic
approximation lemma which we used. Lemma 2.1 in \cite{Qiu2} (see also \cite[Lemma 2.1]{DG}) only guarantee the existence of 
$\A$-harmonic function $h$ which approximate the rescaled solution $w$ in $L^2$. This restricts the growth order $q<n$ to estimate 
$\dashint_{B_\rho(x_0)}\lvert w-h \rvert^qdx$ by the Sobolev-Poincar\'{e} inequality. In contrast our $\A$-harmonic 
approximation lemma guarantees the approximation in $L^q$ as well as in $L^2$, and this allows us to obtain the regularity 
result at any growth order. 

We close this section by briefly summarizing the notation used in this paper.
As note above, we consider a bounded domain $\Omega\subset\R^n$, and maps from $\Omega$ to $\R^N$,
where we take $n\geq 2$, $N\geq 1$. For a given set $X$ we denote by $\mathscr{L}^n(X)$ as $n$-dimensional 
Lebesgue measure. We write $B_\rho(x_0):=\{ x\in\R^n\> :\> \lvert x-x_0 \rvert<\rho\}$. For bounded set
$X\subset\R^n$ with $\mathscr{L}^n(X)>0$, we denote the average of a given function $g\in L^1(X,\R^N)$ by 
$\dashint_Xgdx$ i.e., $\dashint_Xgdx =\frac{1}{\mathscr{L}^n(X)}\int_Xgdx$. In particular, we write
$g_{x_0,\rho}=\dashint_{B_\rho(x_0)\cap\Omega}gdx$. We write $\mathrm{Bil}(\mathrm{Hom}(\R^n,\R^N))$
for the space of bilinear forms on the space $\mathrm{Hom}(\R^n,\R^N)$ of linear maps from $\R^n$ to
$\R^N$. We denote $c$ a positive constant, possibly varying from line by line. Special occurrences will be denoted by 
capital letters $K$, $C_1$, $C_2$ or the like.  

\section{Hypothesis and Statement of Results}
\begin{dfn}\label{wsol}
We define $u\in W^{1,q}(\Omega,\R^N),q\geq 2$ is a weak solution of \eqref{system} if $u$ satisfies
\begin{equation}
 \int_\Omega \langle A(x,u,Du),D\varphi\rangle dx
 =\int_\Omega \langle f,\varphi\rangle dx \label{ws}
\end{equation}
for all $\varphi\in C^{\infty}_0(\Omega,\R^N)$, where $\langle\cdot,\cdot\rangle$ is the standard Euclidean 
inner product on $\R^N$ or $\R^{nN}$.
\end{dfn}
We assume following structure condition.
\begin{enumerate}
\item[\bf{(H1)}]
$A(x,\xi,p)$ is differentiable in $p$ with continuous derivatives. Moreover, there exists $L\geq 1$ such that
\[
 \left\lvert D_pA(x,\xi,p)\right\rvert\leq L(1+\lvert p \rvert)^{q-2} 
 \quad \text{for all }(x,\xi,p)\in\Omega\times\R^N\times\mathrm{Hom}(\R^n,\R^N);
\]
this infers the existence of a modulus of continuity $\omega:[0,\infty)\times [0,\infty)\to [0,1]$ with 
$\omega(t,0)=0$ for all $t$ such that $t\mapsto\omega(s,t)$ is nondecreasing for fixed $s$,
$s\mapsto\omega(s,t)$ is concave and nondecreasing for fixed $t$. $\omega(s,t)$ also satisfies 
\[
 \left\lvert D_p A(x,\xi,p)-D_p A(\widetilde{x},\widetilde{\xi},\widetilde{p})\right\rvert 
 \leq L\omega\left(\lvert\xi\rvert+\lvert\nu\rvert,\lvert x-\widetilde{x}\rvert^2 
 +\lvert\xi-\widetilde{\xi}\rvert^2+\lvert p-\widetilde{p}\rvert^2\right) 
 (1+\lvert p \rvert+\lvert\widetilde{p}\rvert)^{q-2}.
\] 
for all $(x,\xi,p),(\widetilde{x},\widetilde{\xi},\widetilde{p})\in\Omega\times\R^N\times\mathrm{Hom}(\R^n,\R^N)$ with
$\lvert\xi\rvert+\lvert p \rvert\leq M$.
\item[\bf{(H2)}]
$A(x,\xi,p)$ is uniformly strongly elliptic i.e., for some $\lambda>0$, $A(x,\xi,p)$ satisfies
 \[
  \left\langle D_pA(x,\xi,p)\nu,\nu\right\rangle \geq \lambda \lvert\nu\rvert^2(1+\lvert p \rvert)^{q-2}
  \quad \text{for all }x\in\Omega, \xi\in\R^N, p,\nu\in\mathrm{Hom}(\R^n,\R^N);
 \]
\item[\bf{(H3)}]
 There exists a modulus of continuity $\mu:[0,\infty)\to(0,\infty)$, and a nondecreasing function
$\kappa :[0,\infty)\to[1,\infty)$ such that 
\begin{equation}
 \lvert A(x,\xi,p)-A(\widetilde{x},\widetilde{\xi},p)\rvert 
 \leq \kappa (\lvert\xi\rvert)\mu\left( \lvert x-\widetilde{x}\rvert+\lvert\xi-\widetilde{\xi}\rvert\right)
 (1+\lvert p \rvert)^{q-1}
\end{equation}
for all $x,\widetilde{x}\in\Omega$, $\xi,\widetilde{\xi}\in\R^N$, $p\in\mathrm{Hom}(\R^n,\R^N)$. Without loss of generality 
we may assume that
\begin{enumerate}
\item[($\mu$1)] $\mu$ is nondecreasing function with $\mu(+0)=0$.
\item[($\mu$2)] $\mu$ is concave; in the proof of the regularity theorem we have to require that $r \mapsto r^{-\alpha}\mu(r)$ 
 is nonincreasing for some exponent $\alpha\in(0,1)$.
\end{enumerate}
We also require modified Dini's condition: 
\begin{enumerate}
\item[($\mu$3)] $\displaystyle F(r) := \int_0^r \frac{\mu^\beta(\rho)}{\rho}d\rho <+\infty$ for some $r>0$ and 
 $\displaystyle \beta\in\left( 0,1\right]$. 
\end{enumerate}
\item[\bf{(H4)}]
There exists constants $a$ and $b$, with $a$ possibly depending on $M>0$, such that 
 \[
  \lvert f(x,\xi,p)\rvert\leq a(M)\lvert p \rvert^q +b
 \]
for all $x\in\Omega$, $\xi\in\R^N$ with $\lvert\xi\rvert\leq M$, and $p\in\mathrm{Hom}(\R^n,\R^N)$.
\end{enumerate}

Using above structure conditions, we state our main theorem.

\begin{thm}\label{pr}
Let $u\in W^{1,q}(\Omega,\R^N)\cap L^\infty(\Omega,\R^N)$ be a bounded weak solution to \eqref{system} under the structure conditions 
{\rm (\textbf{H1}), (\textbf{H2}), (\textbf{H3}), (\textbf{H4}), ($\mu$1), ($\mu$2)} and {\rm ($\mu$3)}, 
satisfying $\lVert u\rVert_\infty\leq M$ and $2^{(10-9q)/2}\lambda >a(M)M$.
Then there is a relatively closed set $\mathrm{Sing}\, u\subset\Omega$, such that the weak solution $u$ satisfies 
$u\in C^1(\Omega\setminus\mathrm{Sing}\, u,\R^N)$.
Further $\mathrm{Sing}\, u \subset \Sigma_1\cup\Sigma_2$, where 
\begin{align*}
 \Sigma_1:&=\left\{ x_0\in\Omega \>\> :\>\>
 \liminf_{\rho\searrow 0}\dashint_{B_\rho(x_0)}\lvert Du-(Du)_{x_0,\rho}\rvert^q dx>0\right\}, \quad \text{and}\\
 \Sigma_2:&=\left\{ x_0\in\Omega \>\> :\>\>
 \limsup_{\rho\searrow 0}\lvert (Du)_{x_0,\rho}\rvert =\infty\right\}
\end{align*}
and in particular, $\mathscr{L}^n(\mathrm{Sing}\, u)=0$. In addition, for $\sigma\in[\alpha,1)$ and 
$x_0\in\Omega\setminus\mathrm{Sing}\, u$ the derivative of $u$ has modulus of continuity $r\mapsto r^\sigma+F(r)$ in a 
neighborhood of $x_0$.
\end{thm}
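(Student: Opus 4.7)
The strategy is the classical $\A$-harmonic approximation scheme adapted to the $q$-growth Dini setting. Fix $x_0\in\Omega$ at which $\lvert(Du)_{x_0,\rho}\rvert$ stays bounded and the excess
$\Phi(x_0,\rho):=\dashint_{B_\rho(x_0)}\lvert Du-(Du)_{x_0,\rho}\rvert^q\,dx$
becomes small on some ball. I would first prove a one-step decay of the shape
\begin{equation*}
 \Phi(x_0,\theta\rho)\leq C_1\theta^{q\sigma}\Phi(x_0,\rho)+C_1\Psi(\rho),
\end{equation*}
for a small $\theta\in(0,1)$ and any $\sigma\in[\alpha,1)$, where $\Psi(\rho)$ is a Dini-type remainder built out of $\mu^\beta(\rho)$, $\omega(\cdot,\rho^2)$ and powers of $1+\lvert(Du)_{x_0,\rho}\rvert$. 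Iterating along $\rho_k=\theta^k\rho_0$ and summing $\sum_k\Psi(\rho_k)$ against ($\mu$3) then yields boundedness of $(Du)_{x_0,\rho_k}$ and the announced modulus $r\mapsto r^\sigma+F(r)$ for the limit of these averages.

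For the one-step decay I would begin with a Caccioppoli inequality of the second kind: test \eqref{ws} with $\varphi=\eta^2(u-\xi-P(\cdot-x_0))$, where $P=(Du)_{x_0,\rho}$, $\xi=u_{x_0,\rho}$ and $\eta$ is a standard cut-off, then use \textbf{(H2)} for the leading term, \textbf{(H1)} and \textbf{(H3)} to control the coefficient perturbations, and \textbf{(H4)} together with the structural smallness $2^{(10-9q)/2}\lambda>a(M)M$ to absorb the inhomogeneous contribution. Next I linearize $A$ about $(x_0,\xi,P)$: this reduces \eqref{ws} to a frozen-coefficient equation for $u-\xi-P(\cdot-x_0)$ whose right-hand side, tested against smooth $\varphi$, is controlled by terms of the form $\mu(\rho+\lvert u-\xi\rvert)(1+\lvert P\rvert)^{q-1}\lVert D\varphi\rVert_{L^\infty}$ plus $\omega$- and $f$-contributions of the same type. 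The frozen operator $\A=D_pA(x_0,\xi,P)$ satisfies the ellipticity and boundedness required by Lemma \ref{A-harm}. After rescaling via $w(y):=\gamma^{-1}\rho^{-1}\bigl(u(x_0+\rho y)-\xi-P\rho y\bigr)$ with $\gamma$ absorbing $\lambda$ and $1+\lvert P\rvert$, the $\A$-harmonic approximation lemma furnishes an $\A$-harmonic $h$ with $\dashint_{B_1}\bigl(\lvert w-h\rvert^2+\lvert w-h\rvert^q\bigr)\,dy$ prescribed small. It is the $L^q$-part of this approximation, absent in \cite{DG,Qiu2}, that removes the restriction $q<n$ coming from Sobolev--Poincar\'e.

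Standard interior estimates $\lvert Dh(0)-Dh(y)\rvert\leq c\lvert y\rvert\sup_{B_{1/2}}\lvert h\rvert$ for $\A$-harmonic $h$, applied on $B_{\theta}$, together with the Caccioppoli inequality and the approximation defect rescaled back to $B_{\theta\rho}$, yield the advertised one-step decay as soon as $\theta$ is chosen so small that the $\A$-harmonic contribution $C\theta^q$ is dominated by $\theta^{q\sigma}$. An iteration lemma in the spirit of Duzaar--Gastel then upgrades this to a Cauchy-with-modulus estimate for $(Du)_{x_0,\rho_k}$, and a Campanato-type characterization promotes the pointwise limit to genuine continuity of $Du$ in a neighborhood of $x_0$, with modulus $r^\sigma+F(r)$. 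Openness of $\Omega\setminus\mathrm{Sing}\,u$ is inherited from the openness of the smallness conditions used, and $\mathscr{L}^n(\Sigma_1\cup\Sigma_2)=0$ follows immediately from Lebesgue's differentiation theorem applied to $\lvert Du\rvert^q\in L^1_{\mathrm{loc}}$.

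The chief obstacle is the tight coordination of three different moduli: the Dini modulus $\mu$ (through its $\beta$-th power), the $D_pA$-modulus $\omega$ from \textbf{(H1)}, and the target H\"older-like exponent $\sigma\in[\alpha,1)$. Each error contribution to the linearized equation must be allocated either to the term consumed by $\theta^{q\sigma}\Phi(x_0,\rho)$ or to the Dini tail, which must remain summable under ($\mu$3); the assumption that $r\mapsto r^{-\alpha}\mu(r)$ is nonincreasing is precisely what lets one gain the H\"older factor $\theta^{q\sigma}$ from $\mu(\theta\rho)$ throughout the iteration. The superquadratic growth $q\geq 2$ additionally enters through the weights $(1+\lvert P\rvert)^{q-2}$ in ellipticity and through the $L^q$-version of the approximation lemma, which, as the authors emphasise, is what makes the whole argument run independently of the relation between $q$ and $n$.
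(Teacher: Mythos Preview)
Your outline is correct and follows essentially the same route as the paper: a Caccioppoli inequality of the second kind, linearization to produce an approximately $\A$-harmonic function, application of the $L^2$-and-$L^q$ version of the $\A$-harmonic approximation lemma (the key point you correctly identify), Campanato estimates for the harmonic comparison map, and iteration using the Dini summability $(\mu 3)$ together with the monotonicity of $r^{-\alpha}\mu(r)$. The only notable technical discrepancies are that the paper works with the weighted combined excess $\Phi(x_0,\rho,\nu)=\dashint\bigl\{\lvert Du-\nu\rvert^2(1+\lvert\nu\rvert)^{-2}+\lvert Du-\nu\rvert^q(1+\lvert\nu\rvert)^{-q}\bigr\}dx$ rather than a pure $L^q$-excess, obtains a decay factor $\theta^2$ (not $\theta^{q\sigma}$) in the one-step estimate, and uses the cut-off power $\psi^q$ (not $\eta^2$) in the Caccioppoli argument, which is the natural choice to absorb the $q$-growth terms cleanly.
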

\section{Some preliminaries}
In this section we recall the $\A$-harmonic approximation lemma, and some standard estimates
for the proof of the regularity theorem. 

First we state the definition of $\A$-harmonic function and present the following version of an
$\A$-harmonic approximation lemma which can be retrieved from the corresponding parabolic version in \cite[Lemma 3.2]{DMS}. 
This lemma allowed us to approximate the weak solution $u$ to the solution of constant coefficients elliptic system in 
$L^2$ as well as in $L^q$. For more detail about $\A$-harmonic approximation technique, we refer to the survey paper \cite{DM}. 
\begin{dfn}[{\cite[Section 1]{DG}}]
For a given $\A\in\mathrm{Bil}(\mathrm{Hom}(\R^n,\R^N))$, we say that $h\in W^{1,q}(\Omega,\R^N)$ 
is an $\A$-harmonic function, if $h$ satisfies
\[
 \int_{\Omega}\A(Dh,D\varphi)dx=0
\]
for all $\varphi\in C^\infty_0(\Omega,\R^N)$.
\end{dfn}

\begin{lem}[{\cite[Lemma 2.3]{BDHS}}]\label{A-harm}
Let $0<\lambda\leq L$ and $q\geq 2$ be given. For every $\varepsilon>0$, there exists a constant 
$\delta=\delta(n,N,q,\lambda,L,\varepsilon)\in(0,1]$ such that the following holds: assume that $\gamma\in[0,1]$ and that $\A$
is a bilinear form on $\mathrm{Hom}(\R^n,\R^N)$ with the properties
\begin{equation}
 \A (\nu,\nu)\geq \lambda\lvert\nu\rvert^2, \quad \text{and} \quad 
 \A (\nu,\widetilde{\nu})\leq L\lvert\nu\rvert\lvert\widetilde{\nu}\rvert,
\end{equation}
for all $\nu,\widetilde{\nu}\in\mathrm{Hom}(\R^n,\R^N)$. Furthermore, let $w\in W^{1,q}(B_\rho(x_0),\R^N)$ be an approximately 
$\A$-harmonic map in the sense that there holds
\[
 \left\lvert\dashint_{B_\rho(x_0)}\A (Dw,D\varphi )dx \right\rvert\leq \delta\sup_{B_\rho(x_0)}\lvert D\varphi\rvert
\]
for all $\varphi\in C^\infty_0(B_\rho(x_0),\R^N)$ and that
\[
 \dashint_{B_\rho(x_0)}\{\lvert Dw\rvert^2+\gamma^{q-2}\lvert Dw\rvert^q\} dx\leq 1.
\]
Then there exists an $\A$-harmonic function $h\in C^\infty(B_{\rho/2}(x_0),\R^N)$ that satisfies
\begin{equation}
 \dashint_{B_{\rho/2}(x_0)}\left\{\lvert Dh\rvert^2+\gamma^{q-2}\lvert Dh\rvert^q\right\} dx\leq \tilde{C}(n,q)
\end{equation}
and, at the same time,
\begin{equation}
 \dashint_{B_{\rho/2}(x_0)}\left\{\left\lvert\frac{w-h}{\rho/2}\right\rvert^2
 +\gamma^{q-2}\left\lvert\frac{w-h}{\rho/2}\right\rvert^q\right\} dx\leq\varepsilon.
\end{equation}
\end{lem}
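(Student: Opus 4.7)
The plan is a compactness/contradiction argument in the spirit of Duzaar--Grotowski. Suppose the conclusion fails: there exist $\varepsilon_0>0$ and sequences $\delta_k\searrow 0$, $\gamma_k\in[0,1]$, balls $B_{\rho_k}(x_k)$, $(\lambda,L)$-bilinear forms $\A_k$, and maps $w_k\in W^{1,q}(B_{\rho_k}(x_k),\R^N)$ satisfying the approximate $\A_k$-harmonicity with tolerance $\delta_k$ and the normalized energy bound, such that no $\A_k$-harmonic map on $B_{\rho_k/2}(x_k)$ simultaneously satisfies both the a priori energy bound with constant $\tilde{C}(n,q)$ and the approximation bound with $\varepsilon_0$. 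Translating and rescaling, we may take $x_k=0$, $\rho_k=1$; subtracting means, we may also assume $(w_k)_{B_1}=0$.

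The normalized energy bound together with Poincaré's inequality renders $\{w_k\}$ bounded in $W^{1,2}(B_1,\R^N)$. Passing to subsequences we obtain $\A_k\to\A_\infty$ (still $(\lambda,L)$-elliptic), $\gamma_k\to\gamma_\infty\in[0,1]$, and $w_k\rightharpoonup w_\infty$ weakly in $W^{1,2}(B_1)$, strongly in $L^2(B_1)$ by Rellich--Kondrachov. Passing to the limit in $\bigl\lvert\dashint_{B_1}\A_k(Dw_k,D\varphi)\,dx\bigr\rvert\leq\delta_k\sup\lvert D\varphi\rvert$ yields $\dashint_{B_1}\A_\infty(Dw_\infty,D\varphi)\,dx=0$ for every $\varphi\in C_0^\infty(B_1,\R^N)$; thus $w_\infty$ is $\A_\infty$-harmonic, hence $C^\infty$ in the interior by the classical theory of constant-coefficient linear systems.

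For the approximation, I would take $h_k$ to be the unique $\A_k$-harmonic map on $B_{1/2}$ satisfying $h_k-w_\infty\in W^{1,2}_0(B_{1/2})$; existence and uniqueness follow from Lax--Milgram, and stability in the coefficients $\A_k\to\A_\infty$ yields $h_k\to w_\infty$ in $W^{1,2}(B_{1/2})$ and locally in $C^\infty$. The smoothness of $w_\infty$ on $\overline{B_{1/2}}$ together with $\gamma_k\leq 1$ then gives the required a priori bound on $Dh_k$ for $k$ large. The $L^2$-piece of the approximation bound reduces to $\lVert w_k-w_\infty\rVert_{L^2(B_{1/2})}+\lVert h_k-w_\infty\rVert_{L^2(B_{1/2})}\to 0$.

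The principal obstacle is the $L^q$-piece, since weak $W^{1,2}$ convergence is insufficient to produce strong $L^q$ convergence when $q>2$. The standard remedy is a Gehring-type higher-integrability lemma for approximately $\A_k$-harmonic maps: a Caccioppoli estimate together with reverse-Hölder self-improvement furnishes a uniform bound $\dashint_{B_{3/4}}\lvert Dw_k\rvert^{q+\eta}\,dx\leq C$ for some $\eta>0$, absorbing the vanishing tolerance $\delta_k$. Rellich--Kondrachov then upgrades $w_k\to w_\infty$ to strong convergence in $L^q(B_{1/2})$, and combined with $\gamma_k^{q-2}\leq 1$ controls the $L^q$ summand, producing an $h_k$ satisfying both conclusions for $k$ large---contradicting our assumption. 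In the degenerate case $\gamma_\infty=0$, one may instead exploit $\gamma_k^{q-2}\to 0$ together with Sobolev embedding on $W^{1,2}$ to conclude the $L^q$-piece directly without invoking higher integrability.
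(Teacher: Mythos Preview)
The paper does not prove this lemma; it is quoted verbatim from \cite[Lemma~2.3]{BDHS} (with the remark that it can be retrieved from the parabolic version in \cite[Lemma~3.2]{DMS}). So there is no in-paper proof to compare against, and your contradiction/compactness outline in the Duzaar--Grotowski style is a natural attempt at a reconstruction.

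That said, your treatment of the $L^q$-piece has a real gap. In the nondegenerate case $\gamma_\infty>0$ you do not need any Gehring step: the hypothesis $\dashint_{B_1}\gamma_k^{q-2}\lvert Dw_k\rvert^q\,dx\le 1$ together with $\gamma_k\ge c>0$ already gives a uniform $W^{1,q}$-bound on $w_k$, and Rellich--Kondrachov then yields $w_k\to w_\infty$ strongly in $L^q(B_{1/2})$. The problem is the degenerate case $\gamma_\infty=0$. Your proposed fix, ``Sobolev embedding on $W^{1,2}$'', only places $w_k$ in $L^{2^*}$ with $2^*=\tfrac{2n}{n-2}$ (for $n\ge 3$); when $q>2^*$ this gives no control on $\dashint_{B_{1/2}}\lvert w_k-h_k\rvert^q\,dx$, and multiplying an unbounded quantity by $\gamma_k^{q-2}\to 0$ proves nothing. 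The correct observation is that the rescaled maps $v_k:=\gamma_k^{(q-2)/q}w_k$ satisfy $\dashint_{B_1}\lvert Dv_k\rvert^q\,dx\le 1$ uniformly, hence are bounded in $W^{1,q}(B_1)$; since $\gamma_k\to 0$ and $w_k$ is bounded in $L^2$, one has $v_k\rightharpoonup 0$ in $W^{1,q}$, and Rellich gives $v_k\to 0$ in $L^q(B_{1/2})$, i.e.\ $\gamma_k^{q-2}\dashint_{B_{1/2}}\lvert w_k\rvert^q\,dx\to 0$. For $h_k$ you should also note that $w_\infty$ is smooth on $\overline{B_{1/2}}$ (being $\A_\infty$-harmonic on $B_1$), so by constant-coefficient regularity up to the boundary the $\A_k$-harmonic extensions $h_k$ are uniformly bounded in $C^1(\overline{B_{1/2}})$; this both supplies the energy bound $\dashint_{B_{1/2}}\{\lvert Dh_k\rvert^2+\gamma_k^{q-2}\lvert Dh_k\rvert^q\}\,dx\le \tilde C(n,q)$ and makes $\gamma_k^{q-2}\dashint_{B_{1/2}}\lvert h_k\rvert^q\,dx\to 0$ trivial. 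With these corrections the compactness scheme closes.
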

Next is a standard estimates for the solutions to homogeneous second order elliptic systems with
constant coefficients, due originally to Campanato \cite[Teorema 9.2]{Cam}.
For convenience, we state the estimate in a slightly general form than the original one.
\begin{thm}[{\cite[Theorem 2.3]{DG}}]\label{Campanato}
Consider $A$, $\lambda$ and $L$ as in Lemma \ref{A-harm}. Then there exists $C_0\geq 1$ depending only on 
$n$, $N$, $\lambda$ and $L$ such that any $\A$-harmonic function $h$ on $B_{\rho/2}(x_0)$ satisfies
\begin{equation}
 \left(\frac{\rho}{2}\right)^2\sup_{{B_{\rho/4}}(x_0)}\lvert Dh\rvert^2
 +\left(\frac{\rho}{2}\right)^4\sup_{B_{\rho/4}(x_0)}\lvert D^2h\rvert^2
 \leq C_0\left(\frac{\rho}{2}\right)^2\dashint_{B_{\rho/2}(x_0)}\lvert Dh\rvert^2dx. \label{campanato}
\end{equation}
\end{thm}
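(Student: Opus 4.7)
The plan is to exploit the fact that since the bilinear form $\A$ has constant coefficients, translations preserve the $\A$-harmonic equation, so every partial derivative of $h$ is again $\A$-harmonic on any compactly contained ball. The theorem then follows by combining a single sup-mean value estimate for $\A$-harmonic maps with an iterated Caccioppoli inequality. Concretely, I would first reduce the target inequality, after dividing through by $(\rho/2)^2$, to the two bounds
\[
\sup_{B_{\rho/4}(x_0)}\lvert Dh\rvert^2 \leq C_0\dashint_{B_{\rho/2}(x_0)}\lvert Dh\rvert^2\,dx,\qquad
(\rho/2)^2\sup_{B_{\rho/4}(x_0)}\lvert D^2h\rvert^2 \leq C_0\dashint_{B_{\rho/2}(x_0)}\lvert Dh\rvert^2\,dx,
\]
so the game is to produce a sup bound by an $L^2$-mean for $\A$-harmonic maps and to trade one order of differentiation for a factor $\rho^{-2}$ via Caccioppoli.

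The core ingredient is the basic sup estimate: for any $\A$-harmonic $v\in W^{1,2}(B_R(x_0),\R^N)$,
\[
\sup_{B_{R/2}(x_0)}\lvert v\rvert^2 \leq c(n,N,\lambda,L)\,\dashint_{B_R(x_0)}\lvert v\rvert^2\,dx.
\]
I would derive this from the standard Caccioppoli inequality (test the weak form against $\eta^2 v$ with a cutoff $\eta$ and use the ellipticity and bounds from (3.2)) applied iteratively to $v$, $Dv$, $D^2v,\dots,D^kv$: each higher derivative is again $\A$-harmonic by the constant-coefficient property, so for every $k$ one gets a constant $c(n,N,\lambda,L,k)$ with
\[
\int_{B_{R/2}(x_0)}\lvert D^kv\rvert^2\,dx \leq c\,R^{-2k}\int_{B_R(x_0)}\lvert v\rvert^2\,dx.
\]
Choosing $k>n/2$ and applying the Sobolev embedding $W^{k,2}\hookrightarrow C^0$ on the intermediate ball yields the desired sup-mean bound; a scaling check confirms that the constant depends only on $n,N,\lambda,L$.

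With this in hand, the two bounds above are immediate. Applying the sup estimate to $v=Dh$, which is $\A$-harmonic on $B_{\rho/2}(x_0)$, gives the first bound. For the second, I would apply the sup estimate to $v=D^2h$ on a slightly smaller intermediate ball $B_{3\rho/8}(x_0)$,
\[
\sup_{B_{\rho/4}(x_0)}\lvert D^2h\rvert^2 \leq c\,\dashint_{B_{3\rho/8}(x_0)}\lvert D^2h\rvert^2\,dx,
\]
and then invoke the Caccioppoli inequality for $Dh$ on $B_{\rho/2}(x_0)$ to absorb the second derivative at the cost of a factor $\rho^{-2}$:
\[
\int_{B_{3\rho/8}(x_0)}\lvert D^2h\rvert^2\,dx \leq \frac{c}{\rho^2}\int_{B_{\rho/2}(x_0)}\lvert Dh\rvert^2\,dx.
\]
Combining yields the second bound, and adding the two produces the stated inequality with a suitable $C_0=C_0(n,N,\lambda,L)$.

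The main obstacle is the sup-mean value estimate: for general elliptic systems a De Giorgi--Nash--Moser type bound is unavailable, so one cannot invoke a scalar maximum principle. The escape route is precisely the constant-coefficient hypothesis, which allows unrestricted differentiation of the equation and a clean Caccioppoli-plus-Sobolev bootstrap; the remaining technical care is to choose nested balls $B_{\rho/4}\subset B_{3\rho/8}\subset B_{\rho/2}$ so that all cutoff gradients contribute only universal factors of $\rho^{-1}$, keeping the final constant $C_0$ independent of $\rho$.
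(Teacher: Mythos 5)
Your argument is correct and is essentially the standard proof of this estimate: the paper itself does not prove Theorem \ref{Campanato} but cites it from \cite{DG} (originally Campanato \cite{Cam}), and the classical argument behind that citation is exactly yours --- constant coefficients make every derivative of $h$ again $\A$-harmonic, an iterated Caccioppoli inequality plus scaled Sobolev embedding gives the sup--mean value bound (the strong ellipticity $\A(\nu,\nu)\geq\lambda\lvert\nu\rvert^2$ assumed in Lemma \ref{A-harm} is what makes the Caccioppoli step work for systems), and one Caccioppoli application to $Dh$ trades $D^2h$ for $\rho^{-2}\lvert Dh\rvert^2$. The only loose ends are routine: the difference-quotient justification that $Dh$, $D^2h$ are genuinely $W^{1,2}$ and $\A$-harmonic, and stating the sup estimate with general nested radii (your literal form $\sup_{B_{R/2}}$ applied with $R=3\rho/8$ would only cover $B_{3\rho/16}$, not $B_{\rho/4}$), which you already flag and which a covering or dilation argument settles without affecting the constant.
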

We state the Poincar\'{e} inequality in a convenient form.
\begin{lem}[{\cite[Proposition 3.10]{GM}}]\label{Poincare}
There exists $C_P\geq 1$ depending only on $n$ such that every $u\in
W^{1,q}(B_\rho(x_0),\R^N)$ satisfies
\begin{equation}
 \int_{B_\rho(x_0)}\lvert u-u_{x_0,\rho}\rvert^qdx 
 \leq C_P\rho^q\int_{B_\rho(x_0)}\lvert Du\rvert^qdx. \label{poincare}
\end{equation}
\end{lem}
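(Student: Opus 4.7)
The plan is to reduce the inequality to the unit ball by a simple scaling argument and then to prove the unit-ball case by a contradiction argument based on the Rellich--Kondrachov compactness theorem.

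For the scaling step, I would set $\tilde u(y):=u(x_0+\rho y)$ for $y\in B_1(0)$. A direct change of variables gives
\[
 \int_{B_\rho(x_0)}\lvert u-u_{x_0,\rho}\rvert^q\,dx \;=\; \rho^n\int_{B_1(0)}\lvert \tilde u-\tilde u_{0,1}\rvert^q\,dy,
\]
and similarly $\int_{B_\rho(x_0)}\lvert Du\rvert^q\,dx=\rho^{n-q}\int_{B_1(0)}\lvert D\tilde u\rvert^q\,dy$. Hence the general statement follows once the inequality is established in the normalized case $x_0=0$, $\rho=1$, with a constant depending only on $n$ (and the fixed exponent $q\geq 2$ used throughout the paper). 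Passage from $\R$ to $\R^N$ is immediate: applying the scalar version to each component and summing yields the vector-valued statement with the same constant.

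For the unit-ball case I would argue by contradiction. If no such constant exists, then for each $k\in\N$ there is $v_k\in W^{1,q}(B_1(0),\R^N)$ with $(v_k)_{0,1}=0$, $\int_{B_1(0)}\lvert v_k\rvert^q\,dy=1$ and $\int_{B_1(0)}\lvert Dv_k\rvert^q\,dy<1/k$. The sequence $\{v_k\}$ is then bounded in $W^{1,q}(B_1(0),\R^N)$, so after passing to a subsequence it converges weakly in $W^{1,q}$ and, by the Rellich--Kondrachov theorem, strongly in $L^q$ to some limit $v$. Weak lower semicontinuity of the gradient norm together with $\lVert Dv_k\rVert_{L^q}\to 0$ forces $Dv\equiv 0$, so $v$ is constant on the connected ball $B_1(0)$. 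Stability of the average under $L^1$-convergence then gives $v_{0,1}=\lim (v_k)_{0,1}=0$, and therefore $v\equiv 0$. But strong $L^q$-convergence combined with $\lVert v_k\rVert_{L^q}=1$ gives $\lVert v\rVert_{L^q}=1$, a contradiction.

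The only nontrivial analytic input, and hence the main obstacle, is the Rellich--Kondrachov compactness embedding $W^{1,q}(B_1(0))\hookrightarrow\hookrightarrow L^q(B_1(0))$, which relies on the smooth (in particular Lipschitz) boundary of the unit ball. Once compactness is available the contradiction argument is purely formal. As an alternative one could give a more quantitative proof via an extension operator and the classical Poincar\'e--Wirtinger inequality on a containing cube, but the compactness route is cleaner and fully suffices for the qualitative statement used subsequently.
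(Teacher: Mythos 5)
Your argument is essentially correct, but note that the paper does not prove this lemma at all: it is quoted verbatim from Giaquinta--Martinazzi \cite[Proposition 3.10]{GM}, where it is obtained in a quantitative way (via a representation/potential-estimate type argument) with an explicit constant. Your route --- scaling to $B_1(0)$ and then a contradiction argument through Rellich--Kondrachov --- is the standard ``soft'' alternative, and the scaling identities and the compactness argument you give are sound. Two small caveats. First, the compactness proof is non-quantitative: the constant it produces depends a priori on $q$ (and on $N$ if you run the contradiction argument directly for $\R^N$-valued maps), whereas the lemma as stated claims dependence on $n$ only; since $q$ and $N$ are fixed throughout the paper this is harmless, but strictly speaking your proof does not recover the stated dependence, while the quantitative proof in \cite{GM} does. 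Second, your remark that the vector-valued case follows from the scalar case ``with the same constant'' is not accurate for $q>2$: the Euclidean norm on $\R^N$ is not the $\ell^q$-norm, so summing the componentwise inequalities introduces norm-equivalence factors depending on $N$ and $q$; it is cleaner to simply run the compactness argument for $\R^N$-valued functions from the start (Rellich--Kondrachov applies verbatim), accepting the $N$-dependence, or to use the representation-formula proof, which is independent of $N$.
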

Using Young's inequality, we obtain the following estimates. 
\begin{lem}[{\cite[Lemma 3.7]{Kana}}]\label{Young2}
Consider fixed $a,b\geq 0$, $q\geq 1$. Then for any $\varepsilon>0$, there exists
$K=K(q,\varepsilon)\geq 0$ satisfying
\begin{equation}
 (a+b)^q\leq (1+\varepsilon)a^q +K b^q. \label{young2}
\end{equation}
\end{lem}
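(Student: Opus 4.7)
Since this is an elementary Young-type inequality with no structural subtlety, the plan is to reduce it to the convexity of $t \mapsto t^q$ on $[0,\infty)$ and pick the threshold weight explicitly.

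First I dispose of the trivial case $q = 1$: taking $K = 1$ the claimed bound $a + b \leq (1+\varepsilon)a + b$ is just $0 \leq \varepsilon a$. For $q > 1$ I would introduce a free weight $\theta \in (0,1)$ and write the tautology
\[
 a + b = (1-\theta)\cdot\frac{a}{1-\theta} + \theta\cdot\frac{b}{\theta}.
\]
Jensen's inequality for the convex function $t \mapsto t^q$ against the probability weights $(1-\theta,\theta)$ yields
\[
 (a+b)^q \leq (1-\theta)\left(\frac{a}{1-\theta}\right)^q + \theta\left(\frac{b}{\theta}\right)^q = (1-\theta)^{1-q} a^q + \theta^{1-q} b^q.
\]
It then suffices to calibrate $\theta$ so the coefficient of $a^q$ hits $1+\varepsilon$. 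Setting $\theta := 1 - (1+\varepsilon)^{-1/(q-1)} \in (0,1)$ gives $(1-\theta)^{1-q} = 1+\varepsilon$, and the constant
\[
 K := \theta^{1-q} = \bigl(1 - (1+\varepsilon)^{-1/(q-1)}\bigr)^{1-q}
\]
depends only on $q$ and $\varepsilon$, as required.

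As a sanity check and alternative route, one can argue by scaling and compactness: when $b > 0$, dividing both sides by $b^q$ reduces the statement to the finiteness of $\sup_{t \geq 0}\bigl((1+t)^q - (1+\varepsilon) t^q\bigr)$; this supremum exists because the expression equals $1$ at $t=0$, is continuous, and tends to $-\infty$ as $t \to \infty$ (since $(1+t)^q/t^q \to 1 < 1+\varepsilon$). That supremum is then the desired $K$, and the case $b = 0$ is trivial.

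There really is no hard step. The only piece of care is that the closed-form $\theta$ above is undefined at the endpoint $q = 1$ (the exponent $-1/(q-1)$ blows up), which is precisely why I isolate $q = 1$ at the outset.
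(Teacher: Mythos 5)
Your argument is correct: the weighted-convexity (Jensen/Young) calibration of $\theta$ gives exactly the constant $K(q,\varepsilon)$ claimed, and isolating $q=1$ handles the degenerate exponent. The paper itself offers no proof here, merely citing \cite[Lemma 3.7]{Kana} with the remark that the estimate follows from Young's inequality, and your derivation is precisely that standard route, so there is nothing to reconcile.
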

\begin{lem}[{\cite[Lemma 2.1]{GMo}}]\label{GM}
For $\delta \geq 0$, and for all $a,b\in\R^{nN}$ we have 
\begin{equation}
 4^{-(1+2\delta)}\leq
 \frac{\displaystyle\int_0^1(1+\lvert sa+(1-s)b\rvert^2)^{\delta/2}ds}{(1+\lvert a\rvert^2+\lvert b-a\rvert^2)^{\delta/2}}
 \leq 4^\delta . \label{GM2}
\end{equation}
\end{lem}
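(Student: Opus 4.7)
The statement is a purely algebraic inequality about the average of $(1+\lvert\cdot\rvert^2)^{\delta/2}$ along the segment joining $b$ and $a$, so the plan involves only elementary estimates; no PDE machinery or harmonic approximation is needed. I would handle the upper and lower bound separately.

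For the upper bound, I would rewrite $sa+(1-s)b = a-(1-s)(a-b)$ and apply the triangle inequality together with the elementary estimate $(x+y)^2\le 2(x^2+y^2)$ to obtain the pointwise bound $1+\lvert sa+(1-s)b\rvert^2 \le 2(1+\lvert a\rvert^2+\lvert a-b\rvert^2)$ for every $s\in[0,1]$. Raising to the power $\delta/2$ gives a bound of $2^{\delta/2}$ for the ratio, which is at most $4^\delta$ since $\delta\ge 0$; integration preserves the bound. This step is completely routine.

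The lower bound is the main point. Because the integrand varies continuously in $s$ and can be small near the foot of the perpendicular from the origin to the line through $b$ and $a$, a useful pointwise lower bound on all of $[0,1]$ is impossible; instead I would isolate a quarter-interval of $[0,1]$ near whichever endpoint of the segment is larger in norm. The asymmetric right-hand side — featuring $\lvert a\rvert^2$ but $\lvert b-a\rvert^2$ — forces a case split. If $\lvert a\rvert\ge \lvert b\rvert$, the reverse triangle inequality gives $\lvert sa+(1-s)b\rvert\ge (2s-1)\lvert a\rvert\ge \lvert a\rvert/2$ for $s\in[3/4,1]$, whence $1+\lvert sa+(1-s)b\rvert^2\ge (1+\lvert a\rvert^2)/4$ on that set. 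Since $\lvert b-a\rvert\le 2\lvert a\rvert$ yields $1+\lvert a\rvert^2+\lvert b-a\rvert^2\le 5(1+\lvert a\rvert^2)$, integrating over $[3/4,1]$ bounds the ratio from below by $\tfrac14\cdot 4^{-\delta/2}\cdot 5^{-\delta/2}$. If instead $\lvert b\rvert>\lvert a\rvert$, the analogous estimate on $s\in[0,1/4]$ produces the same lower bound, now expressed in terms of $(1+\lvert b\rvert^2)$ and using $\lvert b-a\rvert\le 2\lvert b\rvert$ together with $\lvert a\rvert\le\lvert b\rvert$.

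To match the claimed constant, it remains to verify $\tfrac14\cdot 20^{-\delta/2}\ge 4^{-(1+2\delta)}$, which reduces to $16^\delta\ge 20^{\delta/2}$ and hence (for $\delta>0$) to $256\ge 20$; the case $\delta=0$ is trivial. The only conceptual obstacle is the case split caused by the asymmetry between $\lvert a\rvert^2$ and $\lvert b-a\rvert^2$ on the right-hand side; once the subinterval of $[0,1]$ is chosen correctly, the remaining work is purely arithmetic bookkeeping of constants.
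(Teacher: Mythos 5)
Your proof is correct. The paper does not prove this lemma at all --- it is quoted from Giaquinta--Modica \cite{GMo} --- so there is no in-paper argument to compare with; your elementary argument (pointwise bound for the upper estimate, and for the lower estimate the case split restricting the integral to the quarter-interval of $[0,1]$ adjacent to whichever of $a$, $b$ has larger norm) is essentially the standard proof of the cited result, and the constant check $\tfrac14\cdot 20^{-\delta/2}\geq 4^{-(1+2\delta)}$ is verified correctly.
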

In the followings, we write the modulus of continuity $\mu$ as 
\[
 \eta(t):=\mu^2\left(\sqrt{t}\right)
\]
by technical reason (cf. {\bf (H3)}). The conditions ($\mu$1), ($\mu$2) and ($\mu$3) are expressed as 
\begin{enumerate}
\item[($\eta$1)] $\eta$ is continuous, nondecreasing, and $\eta(+0)=0$, 
\item[($\eta$2)] $\eta$ is concave; and $t \mapsto t^{-\alpha}\eta(t)$ is nonincreasing for the same exponent $\alpha$ as 
in ($\mu$2), 
\item[($\eta$3)] $\displaystyle 
\widetilde{F}(t):= \left[ 2F\left(\sqrt{t}\right)\right]^2
 =\left[ \int_0^t\frac{\sqrt{\eta^\beta(\tau)}}{\tau}d\tau\right]^2<+\infty$ for some $t>0$.
\end{enumerate}
Changing $\kappa$ by a constant, but keeping $\kappa\geq 1$, we can also assume that
\begin{enumerate}
\item[($\eta$4)] $\eta(1)=1$, implying $t\leq\eta(t)\leq 1$ \quad for $t\in(0,1]$.
\end{enumerate}
From the fact that $\eta$ is nondecreasing, for $t\leq s$ and $\sigma\leq 1/\alpha$, we deduce
$s\eta^{\sigma}(t)\leq s\eta^{\sigma}(s)$.
For $s\leq t$, we use nonincreasing property of $t^{-\alpha}\eta(t)$ and $\eta(s)\leq 1$, and we obtain
$s\eta^{\sigma}(t)\leq t$. Combining both cases we obtain
\[
 s\eta^{\sigma}(t)\leq s\eta^{\sigma}(s)+t \quad \text{for}\> s\in[0,1],\ t>0,\ \sigma\leq\frac{1}{\alpha}.
\]
In particular, we have
\begin{enumerate}
\item[($\eta$5)] $s\eta(t)\leq s\eta(s)+t$ \quad for $s\in[0,1]$, $t>0$,
\item[($\eta$6)] $s\sqrt{\eta(t)}\leq s\sqrt{\eta(s)}+t$ \quad for $s\in[0,1]$, $t>0$. 
\end{enumerate}
From ($\eta 2$) we infer for $i\in\N\cup\{ 0\}$, $\theta\in(0,1/8]$, $t>0$
\[
 \int_{\theta^{2(i+1)}t}^{\theta^{2i}t}\frac{\sqrt{\eta^\beta(\tau)}}{\tau}d\tau
 \geq \sqrt{\frac{\eta^\beta(\theta^{2i}t)}{(\theta^{2i}t)^{\alpha\beta}}}\int_{\theta^{2(i+1)}t}^{\theta^{2i}t}
  \tau^{(\alpha\beta-2)/2}d\tau 
 =\frac{2}{\alpha\beta}\left( 1-\theta^{\alpha\beta}\right)\sqrt{\eta^\beta(\theta^{2i}t)} ,
\]
which implies
\begin{align}
 \sum_{i=0}^{k-1}\sqrt{\eta^\beta(\theta^{2i}t)} 
 &\leq \frac{\alpha\beta}{2(1-\theta^{\alpha\beta})}\sqrt{\widetilde{F}(t)} \label{sumeta}
\end{align}
for $k\in\N$. This yields in particular that 
\begin{equation}
 \eta(t)\leq \frac{\alpha^2\beta^2}{4(1-\theta^{\alpha\beta})^2}\widetilde{F}(t) \label{etaF}
\end{equation}
for all $t\in [0,1]$. Moreover, for $t\in [0,1]$, $\theta\in(0,1/8]$, we have
\begin{align}
 t^{-\alpha}\widetilde{F}(t)
  &=t^{-\alpha}\left[\sqrt{\widetilde{F}(\theta t)}
   +\int_{\theta t}^t\sqrt{\tau^{-\alpha}\eta(\tau)}\tau^{(\alpha-2)/2}d\tau\right]^2 \notag\\
  &\leq t^{-\alpha}\left[\sqrt{\widetilde{F}(\theta t)}
   +\frac{2}{\alpha}\sqrt{(\theta t)^{-\alpha}\eta(\theta t)}\left\{\sqrt{t^\alpha}-\sqrt{(\theta t)^\alpha}\right\} \right]^2 \notag\\
  &\leq \left[\sqrt{t^{-\alpha}\widetilde{F}(\theta t)}
   +\sqrt{(\theta t)^{-\alpha}\widetilde{F}(\theta t)}
   \frac{1-\theta^{\alpha/2}}{1-\theta^{\alpha\beta}}\right]^2 \notag\\
  &\leq 4(\theta t)^{-\alpha}\widetilde{F}(\theta t). \label{Fnoninc}
\end{align}

\section{Caccioppoli-type inequality}
For $s,t\geq 0$ let
\[
 \rho_1(s,t):= (1+t)^{-1}\kappa^{-1}(s+t), \qquad G(s,t):= (1+t)^2\kappa^{2q}(s+t).
\]
Note that $\rho_1\leq 1$ and $G\geq 1$.
\begin{lem}\label{Caccioppoli}
Consider $\nu\in\mathrm{Hom}(\R^n,\R^N)$ and $\xi\in\R^N$ with $\lvert\xi\rvert\leq M$ fixed. Let 
$u\in W^{1,q}(\Omega,\R^N)\cap L^\infty(\Omega,\R^N)$ be a bounded weak solution to \eqref{system} under 
the structure conditions {\rm (\textbf{H1}), (\textbf{H2}), (\textbf{H3}), (\textbf{H4}), ($\eta$1), ($\eta$2), ($\eta$3)} 
and {\rm ($\eta$4)} with satisfying $\lVert u\rVert_\infty \leq M$ and $2^{(10-9q)/2}\lambda >a(M)M$. 
Then for any $x_0\in\Omega$ and $\rho\leq \rho_1(\lvert\xi\rvert,\lvert\nu\rvert)$ such
that $B_\rho(x_0)\Subset\Omega$, there holds 
\begin{align}
 &\dashint_{B_{\rho/2}(x_0)}\left\{\frac{\lvert Du-\nu\rvert^2}{(1+\lvert\nu\rvert)^2}
 +\frac{\lvert Du-\nu\rvert^q}{(1+\lvert\nu\rvert)^q}\right\} dx \notag\\
 \leq C_1&\Bigg[ \dashint_{B_\rho(x_0)}\left\{\left\lvert\frac{u-\xi-\nu(x-x_0)}{\rho(1+\lvert\nu\rvert)}\right\rvert^2 
 +\left\lvert\frac{u-\xi-\nu(x-x_0)}{\rho(1+\lvert\nu\rvert)}\right\rvert^q\right\} dx 
 +G(\lvert\xi\rvert,\lvert\nu\rvert)\eta(\rho^2)
 +\left( a\lvert\nu\rvert +b\right)^2\rho^2 \Bigg] \label{caccioppoli}
\end{align}
with $C_1\geq 1$ depending only on $\lambda$, $q$, $L$, $a(M)$ and $M$. 
\end{lem}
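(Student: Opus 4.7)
The approach is the standard Caccioppoli scheme for elliptic systems with $q$-growth, performed with respect to the affine comparison map. I introduce the shifted function $v(x) := u(x) - \xi - \nu(x-x_0)$, so that $Dv = Du - \nu$, and choose a radial cutoff $\zeta \in C^\infty_0(B_\rho(x_0))$ with $\zeta \equiv 1$ on $B_{\rho/2}(x_0)$, $0 \leq \zeta \leq 1$, and $|D\zeta| \leq 4/\rho$. Testing \eqref{ws} against $\varphi = \zeta^q v$ and using that the constant matrix $A(x_0,\xi,\nu)$ contributes zero against a compactly supported gradient, I obtain
\[
\int_{B_\rho(x_0)} \langle A(x,u,Du) - A(x_0,\xi,\nu), D\varphi\rangle\, dx = \int_{B_\rho(x_0)} \langle f(x,u,Du), \varphi\rangle\, dx.
\]
I then decompose $A(x,u,Du) - A(x_0,\xi,\nu) = [A(x,u,Du) - A(x,u,\nu)] + [A(x,u,\nu) - A(x_0,\xi,\nu)]$ and split $D\varphi = \zeta^q Dv + q\zeta^{q-1}\, v\otimes D\zeta$.

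The coercive term is $\int \langle A(x,u,Du) - A(x,u,\nu), Dv\rangle \zeta^q\, dx$; writing the difference via $\int_0^1 D_p A(x,u,\nu+sDv)Dv\,ds$, applying (\textbf{H2}), and invoking Lemma \ref{GM}, I bound this below by $c(\lambda,q)\int [(1+|\nu|)^{q-2}|Dv|^2 + |Dv|^q]\zeta^q\, dx$, which is exactly the expression that, after division by $(1+|\nu|)^q$, gives the left-hand side of \eqref{caccioppoli}. The corresponding boundary contribution $q\zeta^{q-1} v\otimes D\zeta$ paired with the same difference is controlled by the growth estimate $|A(x,u,Du)-A(x,u,\nu)| \leq L(1+|\nu|+|Dv|)^{q-2}|Dv|$ from (\textbf{H1}); Lemma \ref{Young2} with a small parameter $\varepsilon$ lets me absorb a piece back into the coercive term and leaves behind the $|v|^2/\rho^2$ and $|v|^q/\rho^q$ integrals that form the first pair on the right-hand side.

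For the coefficient-oscillation piece $\int \langle A(x,u,\nu) - A(x_0,\xi,\nu), D\varphi\rangle\, dx$, the bound (\textbf{H3}) gives the factor $\kappa(|\xi|+|u-\xi|)\mu(|x-x_0|+|u-\xi|)(1+|\nu|)^{q-1}$; the $\kappa$-factor is majorised by $\kappa(3M)$ since $\|u\|_\infty,|\xi|\leq M$, and the $\mu$-factor is handled via concavity of $\mu$, the identity $\mu^2(r) = \eta(r^2)$, and properties ($\eta$4)--($\eta$6), producing an $\eta(\rho^2)$ contribution after another Young's inequality. Tracking the power $(1+|\nu|)^{2q-2}$ that arises from squaring (\textbf{H3}) and dividing by the $(1+|\nu|)^q$ normalization produces exactly the factor $(1+|\nu|)^{q-2}$ inside $G(|\xi|,|\nu|) = (1+|\nu|)^2\kappa^{2q}(|\xi|+|\nu|)$; a small absorbable piece is returned to the left-hand side. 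Finally, the inhomogeneous term $\int f\zeta^q v\, dx$ is estimated with $|v| \leq 2M + |\nu|\rho \leq 2M+1$, which follows from $\rho \leq \rho_1 \leq (1+|\nu|)^{-1}$; the splitting $|Du|^q \leq 2^{q-1}(|\nu|^q + |Dv|^q)$ together with $|v|$-bound yields a term $2^{q-1}a(M)\cdot 2M\int|Dv|^q\zeta^q\,dx$ whose absorption into the coercive $\lambda\int |Dv|^q\zeta^q\,dx$ is made possible \emph{precisely} by the hypothesis $2^{(10-9q)/2}\lambda > a(M)M$; the residual $a|\nu|^q$ and $b$ terms collapse into $(a|\nu|+b)^2\rho^2$ via another Young's inequality.

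The principal obstacle is the bookkeeping: several different powers of $(1+|\nu|)$ (namely $2,\,q-2,\,q-1,\,q,\,2q-2,\,2q$) appear across the three blocks of estimates, and they must all be reconciled with the specific normalization by $(1+|\nu|)^q$ on the left and the combination $G(|\xi|,|\nu|)\eta(\rho^2)$ on the right. A secondary delicacy is that the absorption constants from the various Young's inequalities used in the coercive and boundary parts must leave \emph{enough} of the coercive $\lambda\int|Dv|^q\zeta^q\,dx$ undisturbed to accommodate the inhomogeneous absorption under the sharp assumption $2^{(10-9q)/2}\lambda > a(M)M$; this forces the exponents in the chain of Young inequalities to be chosen tightly rather than generically.
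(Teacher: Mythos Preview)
Your scheme is the paper's own: test with $\zeta^q v$, extract coercivity from (\textbf{H2}) together with Lemma~\ref{GM}, treat the $(x,u)$-oscillation via (\textbf{H3}) and $(\eta4)$--$(\eta6)$, and absorb the inhomogeneous contribution using (\textbf{H4}). The only structural difference is that the paper inserts the intermediate point $(x,\ell)$ and splits the oscillation into two pieces $A(x,u,\nu)-A(x,\ell,\nu)$ and $A(x,\ell,\nu)-A(x_0,\xi,\nu)$, whereas you go from $(x,u)$ to $(x_0,\xi)$ in one step; either route works.

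There is, however, one concrete misstep. For the inhomogeneous term you write $|Du|^q\le 2^{q-1}(|\nu|^q+|Dv|^q)$, which produces the coefficient $2^{q-1}a(M)\cdot 2M=2^{q}a(M)M$ in front of $\int\zeta^q|Dv|^q\,dx$. The coercive constant coming from (\textbf{H2}) and Lemma~\ref{GM} is $2^{(12-9q)/2}\lambda$, so absorption would require $2^{(12-9q)/2}\lambda>2^{q}a(M)M$, i.e.\ $2^{(12-11q)/2}\lambda>a(M)M$, which is \emph{strictly stronger} than the hypothesis $2^{(10-9q)/2}\lambda>a(M)M$ for every $q>1$. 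Your final paragraph correctly identifies that the Young exponents must be chosen tightly, but the splitting you actually wrote down is not tight enough: you must use Lemma~\ref{Young2} in the form $(a+b)^q\le(1+\varepsilon')a^q+K(q,\varepsilon')b^q$ so that the coefficient of $\int\zeta^q|Dv|^q\,dx$ becomes $a(M)(1+\varepsilon')(2M+|\nu|\rho)$, which for small $\varepsilon,\varepsilon'$ can indeed be absorbed under the stated hypothesis. This is exactly what the paper does.

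A smaller point: your description of the (\textbf{H3}) block is numerically off. After Young's inequality pairing $\kappa\mu(1+|\nu|)^{q-1}$ against $|Dv|$ (with weight $(1+|\nu|)^{(q-2)/2}$ on $|Dv|$), the remainder carries $(1+|\nu|)^{q}\kappa^2\eta(\cdot)$, not $(1+|\nu|)^{2q-2}$; and $G(|\xi|,|\nu|)=(1+|\nu|)^2\kappa^{2q}(|\xi|+|\nu|)$ contains $(1+|\nu|)^2$, not $(1+|\nu|)^{q-2}$. These are bookkeeping slips rather than strategic errors, and once $(\eta5)$ is applied with $s=\rho^2(1+|\nu|)^2\kappa^2\le 1$ (which is where the restriction $\rho\le\rho_1$ enters), the correct factor $(1+|\nu|)^q G(|\xi|,|\nu|)\eta(\rho^2)$ emerges before the final division by $(1+|\nu|)^q$.
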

\begin{proof}
Assume $x_0\in\Omega$ and $\rho\leq 1$ satisfy $B_{\rho}(x_0)\Subset\Omega$ and $\rho\leq\rho_1(\lvert\xi\rvert,\lvert\nu\rvert)$.
We denote $\xi+\nu(x-x_0)$ by $\ell(x)$ and take a standard cut-off function $\psi\in C^\infty_0(B_\rho(x_0))$
satisfying $0\leq\psi\leq 1$, $\lvert D\psi\rvert\leq 4/\rho$, $\psi\equiv 1$ on $B_{\rho/2}(x_0)$.
Then $\varphi :=\psi^q(u-\ell )$ is admissible as a test function in \eqref{ws}, and obtain
\begin{align}
 \dashint_{B_\rho(x_0)}\psi^q\langle &A(x,u,Du),Du-\nu \rangle dx \notag\\
 =-\,&\dashint_{B_\rho(x_0)}\langle A(x,u,Du),q\psi^{q-1}
 D\psi\otimes (u-\ell)\rangle dx
 +\dashint_{B_\rho(x_0)}\langle f,\varphi\rangle dx, \label{system2}
\end{align}
where $\xi\otimes\zeta:=\xi_i\zeta^\alpha$. We further have
\begin{align}
 -\,\dashint_{B_\rho(x_0)}\psi^q\langle &A(x,u,\nu),Du-\nu \rangle dx \notag\\
 =&\dashint_{B_\rho(x_0)}\langle A(x,u,\nu),q\psi^{q-1} \psi\otimes
 (u-\ell)\rangle dx-\dashint_{B_\rho(x_0)}\langle A(x,u,\nu),D\varphi
 \rangle dx, 
\end{align}
and
\begin{equation}
 \dashint_{B_\rho(x_0)}\langle A(x_0,\xi,\nu),D\varphi \rangle dx=0. \label{constcoeff}
\end{equation}
Adding these equations, from \eqref{system2} to \eqref{constcoeff}, we obtain 
\begin{align}
&\dashint_{B_\rho(x_0)}\psi^q \langle A(x,u,Du)-A(x,u,\nu), Du-\nu\rangle dx \notag \\
 =&-\dashint_{B_\rho(x_0)}\langle A(x,u,Du)-A(x,u,\nu),q\psi^{q-1} D\psi\otimes (u-\ell)\rangle dx \notag \\
 &-\dashint_{B_\rho(x_0)}\langle A(x,u,\nu)-A(x,\ell,\nu),D\varphi\rangle dx \notag \\
 &-\dashint_{B_\rho(x_0)}\langle A(x,\ell,\nu)-A(x_0,\xi,\nu) ,D\varphi \rangle dx \notag \\
 &+\dashint_{B_\rho(x_0)}\langle f,\varphi\rangle dx \notag \\
 =:& \>\> \I+\II+\III+\IV. \label{caccio-devide}
\end{align}
The terms I, II, III and IV are defined above. Using the ellipticity condition ({\bf H2}) to the left hand side 
of \eqref{caccio-devide}, we get
\begin{align*}
 &\langle A(x,u,Du)-A(x,u,\nu),Du-\nu\rangle \\
 =&\int_0^1\left\langle D_p A(x,u,sDu+(1-s)\nu)(Du-\nu),
 Du-\nu\right\rangle ds \\
 \geq& \lambda \lvert Du-\nu\rvert^2\int_0^1(1+\lvert sDu+(1-s)\nu\rvert)^{q-2}ds.
\end{align*}
Then we estimate above by \eqref{GM2} in Lemma \ref{GM} and obtain
\begin{align}
 &\langle A(x,u,Du)-A(x,u,\nu),Du-\nu\rangle \notag \\
 \geq& 2^{(12-9q)/2}\lambda
 \left\{(1+\lvert\nu\rvert)^{q-2}\lvert Du-\nu\rvert^2+\lvert Du-\nu\rvert^q\right\} .\label{elliptic}
\end{align}
For $\varepsilon >0$ to be fixed later, using ({\bf H1}) and Young's inequality, we have
\begin{align}
 \lvert\,\I\,\rvert 
 \leq &\varepsilon\dashint_{B_\rho(x_0)}\psi^q\left\{(1+\lvert\nu\rvert)^{q-2}
 \lvert Du-\nu\rvert^2+\lvert Du-\nu\rvert^q\right\}dx \notag\\
 &+c(p,L,\varepsilon) \dashint_{B_\rho(x_0)}\left\{ (1+\lvert\nu\rvert)^{q-2}\left\lvert\frac{u-\ell}{\rho}\right\rvert^2 
 +\left\lvert\frac{u-\ell}{\rho}\right\rvert^q\right\}dx. \label{caccio-I}
\end{align}
In order to estimate II, we first use ({\bf H3}) and $D\varphi=\psi^q(Du-\nu)+q\psi^{q-1}D\psi\otimes
(u-\ell)$, we get
\begin{align*}
 \lvert\,\II\,\rvert 
 \leq &\dashint_{B_\rho(x_0)}\kappa(\lvert\xi\rvert+\lvert\nu\rvert\rho) 
  \mu\left(\lvert u-\ell\rvert\right)(1+\lvert\nu\rvert)^{q-1} \psi^q\lvert Du-\nu\rvert dx \\
  &+\dashint_{B_\rho(x_0)}\kappa(\lvert\xi\rvert+\lvert\nu\rvert\rho)\mu\left(\lvert u-\ell\rvert\right)(1+\lvert\nu\rvert)^{q-1}
 q\psi^{q-1}\lvert D\psi\rvert\lvert u-\ell\rvert dx \\
 =:& \II_1+\II_2.
\end{align*}
The terms $\II_1$ and $\II_2$ are defined above. Using Young's inequality we estimate $\II_1$ as
\begin{align*}
 \lvert\,\II_1\rvert 
 \leq &\varepsilon\dashint_{B_\rho(x_0)}\psi^q(1+\lvert\nu\rvert)^{q-2}\lvert Du-\nu\rvert^2 dx
 +\frac{1}{\varepsilon}\dashint_{B_\rho(x_0)}(1+\lvert\nu\rvert)^q 
 \kappa^2(\lvert\xi\rvert+\lvert\nu\rvert)\eta\left(\lvert u-\ell\rvert^2\right) dx. 
\end{align*}
Note that our choice $\rho\leq\rho_1(\lvert\xi\rvert,\lvert\nu\rvert)$ allow us to apply ($\eta 5$), so that we get
\begin{align*}
 \lvert\,\II_1\rvert 
 \leq &\varepsilon\dashint_{B_\rho(x_0)}\psi^q(1+\lvert\nu\rvert)^{q-2}\lvert Du-\nu\rvert^2dx 
  +\frac{1}{\varepsilon}\dashint_{B_\rho(x_0)}(1+\lvert\nu\rvert)^{q-2}\left\lvert \frac{u-\ell}{\rho}\right\rvert^2dx \\
 &+\frac{1}{\varepsilon}\dashint_{B_\rho(x_0)}(1+\lvert\nu\rvert)^q \kappa^2(\lvert\xi\rvert+\lvert\nu\rvert) 
 \eta\Bigl(\rho^2(1+\lvert\nu\rvert)^2 \kappa^2(\lvert\xi\rvert+\lvert\nu\rvert)\Bigr)dx.  
\end{align*}
Using the definition of $G(\cdot,\cdot)$ and the fact that $\eta(ct)\leq c\eta(t)$ for $c\geq 1$, we deduce
\begin{align*}
 \lvert\,\II_1\rvert 
 \leq &\varepsilon\dashint_{B_\rho(x_0)}\psi^q(1+\lvert\nu\rvert)^{q-2}\lvert Du-\nu\rvert^2dx 
 +\frac{1}{\varepsilon}\dashint_{B_\rho(x_0)}(1+\lvert\nu\rvert)^{q-2}\left\lvert \frac{u-\ell}{\rho}\right\rvert^2dx \\
 &+\frac{1}{\varepsilon}(1+\lvert\nu\rvert)^qG(\lvert\xi\rvert,\lvert\nu\rvert)\eta\left(\rho^2\right).  
\end{align*}
Similarly we see
\begin{align*}
 \lvert\,\II_2\rvert 
 \leq &c(q,\varepsilon) 
 \dashint_{B_\rho(x_0)}(1+\lvert\nu\rvert)^{q-2}\left\lvert \frac{u-\ell}{\rho}\right\rvert^2dx 
 +c(q,\varepsilon)(1+\lvert\nu\rvert)^qG(\lvert\xi\rvert,\lvert\nu\rvert)\eta\left(\rho^2\right).
\end{align*}
Combining these two estimates and get
\begin{align}
 \lvert\,\II\,\rvert 
 \leq &\varepsilon\dashint_{B_\rho(x_0)}\psi^q(1+\lvert\nu\rvert)^{q-2}\lvert Du-\nu\rvert^2dx 
 +c(q,\varepsilon)\dashint_{B_\rho(x_0)}(1+\lvert\nu\rvert)^{q-2}\left\lvert\frac{u-\ell}{\rho}\right\rvert^qdx \notag \\
 &+c(q,\varepsilon)(1+\lvert\nu\rvert)^qG(\lvert\xi\rvert,\lvert\nu\rvert)\eta\left(\rho^2\right) . \label{caccio-II}
\end{align}
In the same way we derive 
\begin{align}
 \lvert\,\III\,\rvert 
 \leq & \dashint_{B_\rho(x_0)}(1+\lvert\nu\rvert)^{q-1}\kappa(\lvert\xi\rvert+\lvert\nu\rvert) 
 \mu\left( (1+\lvert\nu\rvert)\rho\right)\psi^q\lvert Du-\nu\rvert dx \notag\\
 &+ \dashint_{B_\rho(x_0)}(1+\lvert\nu\rvert)^{q-1}\kappa(\lvert\xi\rvert+\lvert\nu\rvert\rho) 
 \mu\bigl( (1+\lvert\nu\rvert)\rho\bigr) 4q\left\lvert\frac{u-\ell}{\rho}\right\rvert dx \notag\\
 \leq & \varepsilon\dashint_{B_\rho(x_0)}\psi^q(1+\lvert\nu\rvert)^{q-2}\lvert Du-\nu\rvert^2dx
 +\varepsilon\dashint_{B_\rho(x_0)}(1+\lvert\nu\rvert)^{q-2}\left\lvert\frac{u-\ell}{\rho}\right\rvert^2dx \notag\\
 &+c(q,\varepsilon)(1+\lvert\nu\rvert)^qG(\lvert\xi\rvert,\lvert\nu\rvert)\eta(\rho^2). \label{caccio-III}
\end{align}
For $\varepsilon'>0$ to be fixed later, using ({\bf H4}), Lemma \ref{Young2}, and Young's inequality, we have
\begin{align}
\lvert\,\IV\,\rvert 
 \leq & \dashint_{B_\rho(x_0)}(a\lvert Du\rvert^q+b)\psi^q\lvert u-\ell\rvert dx \notag\\
 \leq & a(1+\varepsilon')\dashint_{B_\rho(x_0)}\psi^q\lvert Du-\nu\rvert^qlvert u-\ell\rvert dx
 +\varepsilon b^2\rho^2 +\frac{1}{\varepsilon}\dashint_{B_\rho(x_0)}\left\lvert\frac{u-\ell}{\rho}\right\rvert^2dx \notag\\
 &+\dashint_{B_\rho(x_0)}\left\{ aK(q,\varepsilon')\rho\lvert\nu\rvert^{(q+2)/2}\right\}
 (1+\lvert\nu\rvert)^{(q-2)/2}\left\lvert\frac{u-\ell}{\rho}\right\rvert dx \notag\\
 \leq & a(1+\varepsilon^\prime)(2M+\lvert\nu\rvert\rho)
 \dashint_{B_\rho(x_0)}\psi^q\lvert Du-\nu\rvert^qdx +\frac{2}{\varepsilon}\dashint_{B_\rho(x_0)}
 (1+\lvert\nu\rvert)^{q-2}\left\lvert\frac{u-\ell}{\rho}\right\rvert^2dx \notag\\
 &+\varepsilon(1+\lvert\nu\rvert)^q\rho^2\left\{ aK(q,\varepsilon')\lvert\nu\rvert +b\right\}^2. \label{caccio-IV}
\end{align}
Combining above estimates, from \eqref{caccio-devide} to \eqref{caccio-IV}, and set
$\lambda'=2^{(12-9q)/2}\lambda$C
$\Lambda :=\lambda'-3\varepsilon-a(1+\varepsilon')(2M+\lvert\nu\rvert\rho)$, this gives
\begin{align}
 &\Lambda\dashint_{B_\rho(x_0)}\psi^q \left\{(1+\lvert\nu\rvert)^{q-2}\lvert Du-\nu\rvert^2+\lvert Du-\nu\rvert^q\right\}dx \notag\\
 \leq & c(q,L,\varepsilon) 
 \left[\dashint_{B_\rho(x_0)} \left\{(1+\lvert\nu\rvert)^{q-2}\left\lvert\frac{u-\ell}{\rho}\right\rvert^2
 +\left\lvert\frac{u-\ell}{\rho}\right\rvert^q\right\}dx 
 +(1+\lvert\nu\rvert)^qG(\lvert\xi\rvert,\lvert\nu\rvert)\eta(\rho^2) \right] \notag\\
 &+\varepsilon(1+\lvert\nu\rvert)^q\left\{ aK\lvert\nu\rvert +b\right\}^2\rho^2. \label{roughcaccio}
\end{align}
Now choose $\varepsilon=\varepsilon(\lambda ,p,a(M),M)>0$ and $\varepsilon'=\varepsilon'(\lambda ,p,a(M),M)>0$ in a right way 
(for more precise way of choosing $\varepsilon$ and $\varepsilon'$, we refer to \cite[Lemma 4.1]{DG}), 
we obtain \eqref{caccioppoli}. 
\end{proof}

\section{Approximatively $\A$-harmonic functions}
\begin{lem}\label{A-harm2}
Under the same assumption in Lemma \ref{caccioppoli}, take $\xi=u_{x_0,\rho}$. Then for any $x_0\in\Omega$ 
and $\rho\leq \rho_1(\lvert\xi\rvert,\lvert\nu\rvert)$ satisfy $B_\rho(x_0)\Subset\Omega$, the inequality
\begin{align}
 \dashint_{B_\rho(x_0)}\A (Dv,D\varphi )dx \leq C_2(1+\lvert\nu\rvert)&\Biggl[ 
 \omega^{1/2}\left(lvert\xi\rvert+\lvert\nu\rvert,\Phi(x_0,\rho,\nu)\right)\Phi^{1/2}(x_0,\rho,\nu) \notag\\
 &+\Phi(x_0,\rho,\nu) +G(\lvert\xi\rvert,\lvert\nu\rvert)\sqrt{\eta(\rho^2)} 
 +\rho(a\lvert\nu\rvert +b)\Biggr]\sup_{B_\rho(x_0)}\lvert D\varphi\rvert \label{Ah}
\end{align}
holds for all $\varphi\in C^\infty_0(B_\rho(x_0),\R^N)$. Where 
\begin{align*}
 v :=& u-\ell = u-\xi-\nu(x-x_0), \\
 \A (Dv,D\varphi ):=& \frac{1}{(1+\lvert\nu\rvert)^{q-1}}\left\langle D_pA(x_0,\xi,\nu )Dv,D\varphi\right\rangle, \\
 \Phi(x_0,\rho,\nu):=& \dashint_{B_\rho(x_0)}\left\{\frac{\lvert Du-\nu\rvert^2}{(1+\lvert\nu\rvert)^2} 
  +\frac{\lvert Du-\nu\rvert^q}{(1+\lvert\nu\rvert)^q}\right\} dx 
\end{align*}
and $C_2\geq 1$ depending only on $n$, $q$, $L$ and $a(M)$. 
\end{lem}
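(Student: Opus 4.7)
The plan is to derive an identity from the weak form \eqref{ws} and then estimate three pieces individually. Since $A(x_0,\xi,\nu)$ is constant, $\int_{B_\rho}\langle A(x_0,\xi,\nu), D\varphi\rangle dx = 0$, and the Fundamental Theorem of Calculus yields $A(x,u,Du) - A(x,u,\nu) = D_pA(x_0,\xi,\nu)(Du-\nu) + R_1$, where
\[
 R_1 := \int_0^1\bigl[D_pA(x,u,\nu + s(Du-\nu)) - D_pA(x_0,\xi,\nu)\bigr]\,ds\,(Du-\nu).
\]
Combining these with \eqref{ws} and dividing by $(1+\lvert\nu\rvert)^{q-1}$ produces
\[
 \dashint_{B_\rho}\A(Dv, D\varphi)\,dx = \frac{1}{(1+\lvert\nu\rvert)^{q-1}}\Bigl(\dashint \langle f, \varphi\rangle\, dx - \dashint \langle A(x,u,\nu) - A(x_0,\xi,\nu), D\varphi\rangle\, dx - \dashint\langle R_1, D\varphi\rangle\, dx\Bigr).
\]

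For the $f$-term, the bound $\lvert\varphi\rvert \leq \rho \sup\lvert D\varphi\rvert$ combined with (\textbf{H4}), the split $\lvert Du\rvert^q \leq c(q)(\lvert Du-\nu\rvert^q + \lvert\nu\rvert^q)$, and the scaling $\rho(1+\lvert\nu\rvert) \leq 1$ coming from $\rho \leq \rho_1(\lvert\xi\rvert, \lvert\nu\rvert)$ produce a contribution dominated, after the division, by $C(1+\lvert\nu\rvert)[a(M)\Phi + \rho(a\lvert\nu\rvert+b)]\sup\lvert D\varphi\rvert$.

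For the $A$-continuity term, (\textbf{H3}) with $\kappa(\lvert u\rvert) \leq \kappa(M)$ gives the pointwise bound $\kappa(M)\mu(\lvert x-x_0\rvert + \lvert u-\xi\rvert)(1+\lvert\nu\rvert)^{q-1}\lvert D\varphi\rvert$. Exploiting the subadditivity and concavity of $\mu$, applying Poincar\'e to $v = u - u_{x_0,\rho} - \nu(x-x_0)$, which has zero mean on $B_\rho$ thanks to $\xi = u_{x_0,\rho}$, and using $\mu(r) = \sqrt{\eta(r^2)}$ together with the definition of $G$, the contribution is controlled by $C(1+\lvert\nu\rvert)[G\sqrt{\eta(\rho^2)} + \Phi]\sup\lvert D\varphi\rvert$; the residual $\Phi$-term appears from a Young's inequality used to split the cross-product of $\mu$ and $\sqrt{\Phi}$.

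The main obstacle is the $R_1$-term. By (\textbf{H1}),
\[
 \lvert R_1\rvert \leq L\omega\bigl(\lvert\xi\rvert+\lvert\nu\rvert,\,\lvert x-x_0\rvert^2 + \lvert u-\xi\rvert^2 + \lvert Du-\nu\rvert^2\bigr)(1+\lvert\nu\rvert+\lvert Du-\nu\rvert)^{q-2}\lvert Du-\nu\rvert.
\]
I split $B_\rho$ into $B^+ := \{\lvert Du-\nu\rvert \leq 1+\lvert\nu\rvert\}$ and $B^- := B_\rho\setminus B^+$. On $B^+$ the algebraic factor reduces to $c(1+\lvert\nu\rvert)^{q-2}\lvert Du-\nu\rvert$; Cauchy--Schwarz, $\omega^2 \leq \omega$ (since $\omega \leq 1$), Jensen's inequality (with $\omega$ concave in its second argument, available by passing to the concave majorant) and Poincar\'e on $v$ bound the argument of $\omega$ by $c\rho^2 + c(1+\lvert\nu\rvert)^2\Phi$. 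The scaling $\omega(\cdot, ct) \leq c\omega(\cdot, t)$ for $c \geq 1$ (a consequence of concavity with $\omega(\cdot,0) = 0$), together with $(a+b)^{1/2} \leq a^{1/2} + b^{1/2}$, then extracts the desired factor $(1+\lvert\nu\rvert)\omega^{1/2}(\lvert\xi\rvert+\lvert\nu\rvert,\Phi)\Phi^{1/2}$, while the $\rho^2$-residual is absorbed into $(1+\lvert\nu\rvert)G\sqrt{\eta(\rho^2)}$. On $B^-$ the algebraic factor is $c\lvert Du-\nu\rvert^{q-1}$; bounding $\omega \leq 1$, using H\"older's inequality and the Chebyshev-type estimate $\lvert B^-\rvert/\lvert B_\rho\rvert \leq \Phi$, this contribution is absorbed into $(1+\lvert\nu\rvert)\Phi\sup\lvert D\varphi\rvert$. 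Summing the three pieces yields \eqref{Ah}.
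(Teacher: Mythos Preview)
Your decomposition is different from the paper's, and the difference creates a genuine gap in the treatment of $R_1$. In the paper the linearization error is written as
\[
\int_0^1\bigl[D_pA(x_0,\xi,\nu)-D_pA(x_0,\xi,\nu+s(Du-\nu))\bigr](Du-\nu)\,ds,
\]
with the base point $(x_0,\xi)$ frozen; the argument of $\omega$ is therefore \emph{exactly} $s^2\lvert Du-\nu\rvert^2$, and after Jensen and H\"older one lands directly on $\omega^{1/2}(\lvert\xi\rvert+\lvert\nu\rvert,\Phi)\Phi^{1/2}+\Phi$. The dependence on $(x,u)$ is pushed entirely into the terms $A(x_0,\xi,Du)-A(x,\ell,Du)$ and $A(x,\ell,Du)-A(x,u,Du)$, which are estimated via \textbf{(H3)} and hence produce only $\eta$-terms.

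In your $R_1$ all three variables differ, so the second argument of $\omega$ carries $\lvert x-x_0\rvert^2+\lvert u-\xi\rvert^2$ in addition to $\lvert Du-\nu\rvert^2$. After Jensen and Poincar\'e this argument is bounded by $c(1+\lvert\nu\rvert)^2\rho^2+c(1+\lvert\nu\rvert)^2\Phi$ (note the $(1+\lvert\nu\rvert)^2$ in front of $\rho^2$, coming from $\lvert u-\xi\rvert\le\lvert v\rvert+\lvert\nu\rvert\rho$). Extracting the $\Phi$-part works as you say, but the residual contribution $(1+\lvert\nu\rvert)\,\omega^{1/2}(\lvert\xi\rvert+\lvert\nu\rvert,\rho^2)\,\Phi^{1/2}$ cannot be ``absorbed into $(1+\lvert\nu\rvert)G\sqrt{\eta(\rho^2)}$'': the moduli $\omega$ (from \textbf{(H1)}) and $\eta$ (from \textbf{(H3)}) are completely unrelated, and no hypothesis in the paper ties $\omega(\,\cdot\,,\rho^2)$ to $\sqrt{\eta(\rho^2)}$. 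Even after Young's inequality you are left with a bare $\omega(\lvert\xi\rvert+\lvert\nu\rvert,\rho^2)$, which does not appear on the right-hand side of \eqref{Ah}. The fix is precisely the paper's: write $A(x,u,Du)-A(x_0,\xi,\nu)$ as $[A(x,u,Du)-A(x_0,\xi,Du)]+[A(x_0,\xi,Du)-A(x_0,\xi,\nu)]$, so that the Fundamental Theorem of Calculus is applied at the frozen point $(x_0,\xi)$ and the $(x,u)$-variation is handled separately through \textbf{(H3)}.

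A secondary point: for the $A$-continuity term you bound $\kappa(\lvert u\rvert)\le\kappa(M)$, but $C_2$ is asserted to depend only on $n,q,L,a(M)$, not on $\kappa(M)$. Apply \textbf{(H3)} in the order $A(x_0,\xi,\nu)-A(x,u,\nu)$ so that the prefactor is $\kappa(\lvert\xi\rvert)\le\kappa(\lvert\xi\rvert+\lvert\nu\rvert)$, which then combines into $G(\lvert\xi\rvert,\lvert\nu\rvert)$ as in the paper.
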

\begin{proof}
Assume $x_0\in\Omega$ and $\rho\leq 1$ which satisfies $B_{\rho}(x_0)\Subset\Omega$ and 
$\rho\leq\rho_1(\lvert\xi\rvert,\lvert\nu\rvert)$.
Without loss of generality we may assume $\displaystyle\sup_{B_\rho(x_0)}\lvert D\varphi\rvert\leq 1$.
Note that this implies $\displaystyle\sup_{B_\rho(x_0)}\lvert\varphi\rvert\leq\rho\leq 1$. Using the fact that 
$\int_{B_\rho(x_0)}A(x_0,\xi,\nu)D\varphi dx=0$ holds for all $\varphi\in C^\infty_0(B_\rho(x_0),\R^N)$ we deduce
\begin{align}
 (1+\lvert\nu\rvert)^{q-1}&\dashint_{B_\rho(x_0)}\mathcal{A}(Dv,D\varphi)dx \notag\\
 =&\dashint_{B_\rho(x_0)}\int_0^1\langle\left[ D_p A(x_0,\xi,\nu)- D_pA(x_0,\xi,\nu+s(Du-\nu))
 \right] (Du-\nu),D\varphi\rangle dsdx \notag\\
 &+\dashint_{B_\rho(x_0)}\langle A(x_0,\xi,Du) -A(x,\ell,Du),D\varphi\rangle dx \notag\\
 &+\dashint_{B_\rho(x_0)}\langle A(x,\ell,Du)-A(x,u,Du),D\varphi\rangle dx \notag\\
 &+\dashint_{B_\rho(x_0)}\langle f,\varphi\rangle dx \notag\\
 =&:\I+\II+\III+\IV \label{Ah-devide}
\end{align}
where terms I, II, III and IV are define above. 

We estimate I using the modulus of continuity $\omega(\cdot,\cdot)$ from ({\bf H1}), the Jensen's inequality 
and H\"{o}lder's inequality, and we get
\begin{align}
 \lvert\,\I\,\rvert 
 &\leq c(q,L)\dashint_{B_\rho(x_0)}\int_0^1 \omega\left(\lvert\xi\rvert+\lvert\nu\rvert,\lvert Du-\nu\rvert^2\right)
 (1+\lvert\nu\rvert+\lvert Du-\nu\rvert)^{q-2}\lvert Du-\nu\rvert dsdx \notag\\
 &\leq 
 c\, (1+\lvert\nu\rvert)^{q-1} \dashint_{B_\rho(x_0)}\omega\left(\lvert\xi\rvert+\lvert\nu\rvert,\lvert Du-D\ell\rvert^2\right)
 \left\{\frac{\lvert Du-\nu\rvert}{1+\lvert\nu\rvert} +\frac{\lvert Du-\nu\rvert^{q-1}}{(1+\lvert\nu\rvert)^{q-1}}\right\}dx \notag\\
 &\leq c\, (1+\lvert\nu\rvert)^{q-1}\left[ \omega^{1/2}\left(\lvert\xi\rvert+\lvert\nu\rvert, 
 (1+\lvert\nu\rvert)^2\Phi(x_0,\rho,\nu)\right)\Phi^{1/2}(x_0,\rho,\nu)\right. \notag\\
 &\qquad\qquad\qquad\quad \left. +\omega^{1/q}\left(\lvert\xi\rvert+\lvert\nu\rvert,(1+\lvert\nu\rvert)^2\Phi(x_0,\rho,\nu)\right)
 \Phi^{1/{q_*}}(x_0,\rho,\ell)\right]\notag\\
 &\leq c\, (1+\lvert\nu\rvert)^q\left[ \omega^{1/2}\left(\lvert\xi\rvert+\lvert\nu\rvert,\Phi(x_0,\rho,\nu)\right)
 \Phi^{1/2}(x_0,\rho,\nu)+\Phi(x_0,\rho,\nu)\right] , \label{Ah-I}
\end{align}
where $q_*>0$ is the dual exponent of $q\geq 2$, i.e., $q_*=q/(q-1)$. 
The last inequality following from the fact that $a^{1/q}b^{1/q_*}=a^{1/q}b^{1/q}b^{(q-2)/q}\leq
a^{1/2}b^{1/2}+b$ holds by Young's inequality and the fact that $\omega(s,ct)\leq c\omega(s,t)$ for $c\geq 1$ which deduce from 
the concavity of $t\mapsto \omega(s,t)$. 

In the same way, using the modulus of continuity $\eta(\cdot)$ from ({\bf H3}), Young's inequality and, we deduce
\begin{align}
 \lvert\,\II\,\rvert 
 \leq &2^{q-2}\kappa(\lvert\xi\rvert+\lvert\nu\rvert)(1+\lvert\nu\rvert)^q \sqrt{\eta(\rho^2)} \notag\\
 &+2^{q-2}\dashint_{B_\rho(x_0)}\kappa(\lvert\xi\rvert+\lvert\nu\rvert) 
 \sqrt{\eta(\rho^2(1+\lvert\nu\rvert)^2)} \lvert Du-\nu\rvert^{q-1}dx \notag\\
 \leq &2^{q-1}(1+\lvert\nu\rvert)^qG(\lvert\xi\rvert,\lvert\nu\rvert)\sqrt{\eta(\rho^2)} 
 +2^{q-2}(1+\lvert\nu\rvert)^q\Phi(x_0,\rho,\nu) .
\end{align}
Here we have used $\eta^{q/2}(\rho^2(1+\lvert\nu\rvert)^2)\leq \sqrt{\eta(\rho^2(1+\lvert\nu\rvert)^2)}$ which follows from 
the nondecreasing property of $t\mapsto \eta(t)$, ($\eta$4) and our assumption $\rho\leq\rho_1\leq 1$. 

We derive, using again the modulus of continuity $\eta(\cdot)$ from ({\bf H3}),
\begin{align*}
 \lvert\,\III\,\rvert 
 \leq & c(q)\dashint_{B_\rho(x_0)}\kappa(\lvert\xi\rvert+\lvert\nu\rvert)\sqrt{\eta(\lvert u-\ell\rvert^2)}(1+\lvert\nu\rvert)^{q-1}dx \\
 &+c(q)\dashint_{B_\rho(x_0)}\kappa(\lvert\xi\rvert+\lvert\nu\rvert)\sqrt{\eta(\lvert u-\ell\rvert^2)}\lvert Du-\nu\rvert^{q-1}dx \\
 =: &\III_1+\III_2,
\end{align*}
where the terms $\III_1$ and $\III_2$ are defined above.
Using H\"{o}lder's inequality, Jensen's inequality, ($\eta$6) and the Poincar\'{e} inequality, we have
\begin{align*}
 \III_1
 &\leq c(q) (1+\lvert\nu\rvert)^{q-1}\kappa(\lvert\xi\rvert+\lvert\nu\rvert) 
 \eta^{1/2}\left(\dashint_{B_\rho(x_0)}\lvert u-\ell\rvert^2 dx\right) \\
 &\leq c\, \rho^{-2}(1+\lvert\nu\rvert)^{q-2}\left\{\rho^2(1+\lvert\nu\rvert)^2 
 \kappa^2(\lvert\xi\rvert+\lvert\nu\rvert)\eta^{1/2}\Bigl(\rho^2(1+\lvert\nu\rvert)^2\kappa^2(\lvert\xi\rvert+\lvert\nu\rvert)\Bigr) 
 +\dashint_{B_\rho(x_0)}\lvert u-\ell\rvert^2 dx\right\} \\ 
 &\leq c(q)(1+\lvert\nu\rvert)^qG(\lvert\xi\rvert,\lvert\nu\rvert)\sqrt{\eta\left(\rho^2\right)} 
 +c(n,q)(1+\lvert\nu\rvert)^q\Phi(x_0,\rho,\nu) .
\end{align*}
Similarly, we have, using Young's inequality, ($\eta$5) and the Poincar\'{e} inequality,
\begin{align*}
 \III_2
 \leq &c(q)\dashint_{B_\rho(x_0)}\kappa^q(\lvert\xi\rvert+\lvert\nu\rvert)\eta^{q/2}\left(\lvert u-\ell\rvert^2\right) dx
  +c(q)\dashint_{B_\rho(x_0)}\lvert Du-\nu\rvert^qdx \\
 \leq &c\, \dashint_{B_\rho(x_0)}\Bigl[\rho^{-2}\left\{ 
  \kappa^2(\lvert\xi\rvert+\lvert\nu\rvert)\rho^2\eta\left(\kappa^2(\lvert\xi\rvert+\lvert\nu\rvert)\rho^2\right) 
  +\lvert u-\ell\rvert^2\right\} \Bigr]^{q/2} dx
  +c\, (1+\lvert\nu\rvert)^q\Phi(x_0,\rho,\nu) \\
 \leq &c(1+\lvert\nu\rvert)^qG(\lvert\xi\rvert,\lvert\nu\rvert)\sqrt{\eta\left(\rho^2\right)}
  +c(n,q)(1+\lvert\nu\rvert)^q\Phi(x_0,\rho,\nu) .
\end{align*}
Thus we obtain
\begin{align}
 \lvert\,\III\,\rvert 
 \leq &c(q)(1+\lvert\nu\rvert)^qG(\lvert\xi\rvert,\lvert\nu\rvert)\sqrt{\eta\left(\rho^2\right)} 
 +c(n,q)(1+\lvert\nu\rvert)^q\Phi(x_0,\rho,\nu). 
\end{align} 
Using ({\bf H4}) and recall that $\displaystyle\sup_{B_\rho(x_0)}\lvert\varphi\rvert\leq\rho$ holds, we have
\begin{align}
 \lvert\,\IV\,\rvert
 &\leq \dashint_{B_\rho(x_0)}\rho a(\lvert Du-\nu\rvert+\lvert\nu\rvert)^qdx +b\rho \notag\\
 &\leq 2^{q-1}a(1+\lvert\nu\rvert)^q\Phi(x_0,\rho,\nu)
  +2^{q-1}\rho(1+\lvert\nu\rvert)^q(a\lvert\nu\rvert +b). \label{25}
\end{align}
Combining these estimates, from \eqref{Ah-devide} to (\ref{25}), we obtain the conclusion.
\end{proof}
\section{Proof of the Regularity Theorem}
Let write $\Phi(\rho) =\Phi(x_0,\rho,(Du)_{x_0,\rho})$ from now on.
Now we are in the position to establish the excess improvement.
\begin{lem}\label{EI}
Assume the same assumption with Lemma \ref{A-harm2}. Let $\theta\in (0,1/8]$ be arbitrary and impose the 
following smallness conditions on the excess:
\begin{enumerate}
 \item[{\rm (i)}] $\displaystyle{\omega^{1/2}\left(\lvert u_{x_0,\rho}\rvert+\lvert (Du)_{x_0,\rho}\rvert, \Phi(\rho)\right)
 +\sqrt{\Phi(\rho)}\leq\frac{\delta}{2}}$ with the constant 
 $\delta =\delta(n,N,q,\lambda,L,\theta^{n+q+2})$ from Lemma \ref{A-harm};
 \item[{\rm (ii)}] $(1+\lvert(Du)_{x_0,\rho}\rvert)\gamma(\rho)\leq \theta^n\left( 2\sqrt{C_0\tilde{C}}\right)^{-1}$, where \\
 $C_0$ and $\tilde{C}$ are constants from Theorem \ref{Campanato} and Lemma \ref{A-harm}, and \\
 $\displaystyle{\gamma(\rho):=C_2\left[\sqrt{\Phi(\rho)}+2\delta^{-1}
 \left\{ G(\lvert u_{x_0,\rho}\rvert,\lvert(Du)_{x_0,\rho}\rvert)\sqrt{\eta(\rho^2)}+\rho(a(1+\lvert(Du)_{x_0,\rho}\rvert)+b)
 \right\}\right]}$.
 \item[{\rm (iii)}] $\rho\leq\rho_1(\lvert u_{x_0,\rho}\rvert,\lvert(Du)_{x_0,\rho}\rvert)$.
\end{enumerate}
Then there holds the excess improvement estimate
\begin{equation}
 \Phi(\theta\rho)\leq C_3\theta^2\Phi(\rho)+H(\lvert u_{x_0,\rho}\rvert,\lvert(Du)_{x_0,\rho}\rvert)\eta(\rho^2),
\end{equation}
with a constant $C_3$ that depends only on $n$, $N$, $\lambda$, $L$, $q$, $a(M)$ and $M$. Here $H(\cdot,\cdot)$ is defined as
\[
 H(s,t):=8\delta^{-2}C_3\left[ G^2(1+s,1+t)+\{a(1+t)+b\}\right].
\]
\end{lem}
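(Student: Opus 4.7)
The plan is to combine the $\A$-harmonic approximation at scale $\rho$ (via Lemmas~\ref{A-harm} and \ref{A-harm2}) with a rescaled Caccioppoli estimate at scale $\theta\rho$. First I set $v := u - \xi - \nu(x-x_0)$ with $\xi = u_{x_0,\rho}$, $\nu = (Du)_{x_0,\rho}$, and introduce a rescaled map $w := v/M$ for an appropriate normalising constant $M\sim(1+|\nu|)\sqrt{\Phi(\rho)}$, together with the second parameter $\gamma := (1+|\nu|)\gamma(\rho)$, which lies in $[0,1]$ precisely by hypothesis~(ii). I then verify that $w$ meets both hypotheses of Lemma~\ref{A-harm}: the combined $L^2$--$L^q$ energy bound $\dashint\{|Dw|^2 + \gamma^{q-2}|Dw|^q\}\,dx\leq 1$ is immediate from the definition of $\Phi$ together with $\gamma\leq 1$; the approximate $\A$-harmonicity $|\dashint\A(Dw,D\varphi)\,dx|\leq \delta\sup|D\varphi|$ follows from Lemma~\ref{A-harm2} combined with hypothesis~(i), the three summands in the definition of $\gamma(\rho)$ being tailored to absorb respectively the $\omega^{1/2}\Phi^{1/2}+\Phi$, the $G\sqrt{\eta}$, and the $\rho(a|\nu|+b)$ error terms on the right-hand side of that lemma.

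Applying Lemma~\ref{A-harm} with $\varepsilon:=\theta^{n+q+2}$ then produces an $\A$-harmonic $h\in C^\infty(B_{\rho/2}(x_0))$ approximating $w$ in $L^2\cap L^q$ up to $\varepsilon(\rho/2)^2$. Theorem~\ref{Campanato} gives the pointwise estimates $\sup_{B_{\rho/4}}|Dh|^2\leq C_0\tilde{C}$ and $\sup_{B_{\rho/4}}|D^2 h|\leq 2\sqrt{C_0\tilde{C}}/\rho$, so that the first-order Taylor polynomial $\bar h(x):=h(x_0)+Dh(x_0)(x-x_0)$ satisfies $\sup_{B_{\theta\rho}}|h-\bar h|\leq \theta^2\rho\sqrt{C_0\tilde{C}}$ and $\sup_{B_{\theta\rho}}|Dh-Dh(x_0)|\leq 2\theta\sqrt{C_0\tilde{C}}$ (using $\theta\leq 1/8$, so that $B_{\theta\rho}\subset B_{\rho/4}$).

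Next I would introduce the corrected affine comparison map $\ell^*(x):=\xi^*+\nu^*(x-x_0)$ with $\xi^*:=\xi+M\,h(x_0)$ and $\nu^*:=\nu+M\,Dh(x_0)$, so that $u-\ell^*=M(w-\bar h)$. Hypothesis~(ii) together with $|Dh(x_0)|\leq\sqrt{C_0\tilde{C}}$ ensures $|\nu^*-\nu|\leq \theta^n/2$, whence $(1+|\nu^*|)$ is comparable to $(1+|\nu|)$, $|\xi^*-\xi|$ is bounded by a fixed constant, and $2\theta\rho\leq\rho_1(|\xi^*|,|\nu^*|)$. Applying Lemma~\ref{Caccioppoli} on the ball $B_{2\theta\rho}(x_0)$ with the pair $(\xi^*,\nu^*)$, I estimate its right-hand side through the decomposition $w-\bar h=(w-h)+(h-\bar h)$: the first summand is controlled by transferring the $\varepsilon$-closeness from $B_{\rho/2}$ to $B_{2\theta\rho}\subset B_{\rho/2}$ at a cost of a factor $(4\theta)^{-n}$, which is absorbed by the choice $\varepsilon=\theta^{n+q+2}$; the second is controlled directly by the Taylor bound. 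Both contributions end up of order $\theta^2 M^2=\theta^2(1+|\nu|)^2\Phi(\rho)$; dividing by $(1+|\nu^*|)^2\sim(1+|\nu|)^2$ and invoking $L^2$-minimality of $(Du)_{x_0,\theta\rho}$ (together with a triangle inequality for the $L^q$ piece) finally yields $\Phi(\theta\rho)\leq C_3\theta^2\Phi(\rho)+H\eta(\rho^2)$, once the $G\eta(4\theta^2\rho^2)$ and $(a|\nu^*|+b)^2(2\theta\rho)^2$ terms arising from Caccioppoli are folded into $H\eta(\rho^2)$.

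The hardest part will be the careful book-keeping of the $(1+|\nu|)$-weights when passing between the two scales: $\Phi$ is weighted by $(1+|(Du)_{x_0,\cdot}|)$-powers that depend on the ball, and the corrected slope $\nu^*$ must be close enough to $\nu$ for these denominators to remain comparable and for $\rho_1(|\xi^*|,|\nu^*|)$ to dominate $2\theta\rho$ --- this is the precise purpose of hypothesis~(ii), which bounds $(1+|\nu|)\gamma(\rho)$ by a fixed multiple of $\theta^n$. As emphasised in the introduction, it is the availability of the $L^q$-version of the $\A$-harmonic approximation lemma (Lemma~\ref{A-harm}) that makes the argument go through uniformly in $q\geq 2$, circumventing the Sobolev--Poincar\'{e} bottleneck that would otherwise restrict the estimate to $q<n$.
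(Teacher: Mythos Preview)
Your strategy matches the paper's: rescale $v=u-\ell$, verify approximate $\A$-harmonicity via Lemma~\ref{A-harm2}, apply Lemma~\ref{A-harm} with $\varepsilon=\theta^{n+q+2}$, Taylor-expand the resulting $h$, and close with Caccioppoli on $B_{2\theta\rho}$. Two of your specific choices, however, do not work as written.

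First, taking the parameter in Lemma~\ref{A-harm} to be $\gamma:=(1+|\nu|)\gamma(\rho)$ does \emph{not} yield the energy bound ``immediately''. With $Dw=(Du-\nu)/M$, the $L^q$ piece $\gamma^{q-2}\dashint|Dw|^q$ then carries an uncontrolled factor $(1+|\nu|)^{q-2}$: for instance with $M=(1+|\nu|)\gamma(\rho)$ one obtains $(1+|\nu|)^{q-2}\gamma(\rho)^{-2}\dashint|Du-\nu|^q/(1+|\nu|)^q$, which is not bounded by $\Phi(\rho)/\gamma(\rho)^2$ when $|\nu|$ is large and $q>2$. The paper instead sets $w=(u-\ell)/\bigl((1+|\nu|)\gamma(\rho)\bigr)$ but uses $\gamma(\rho)$ itself (not $(1+|\nu|)\gamma(\rho)$) as the parameter; then the two weights in the definition of $\Phi$ are reproduced exactly and $\dashint\{|Dw|^2+\gamma(\rho)^{q-2}|Dw|^q\}\,dx=\Phi(\rho)/\gamma(\rho)^2\leq C_2^{-2}\leq 1$.

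Second, Lemma~\ref{Caccioppoli} requires $|\xi|\leq M$ (the $L^\infty$-bound on $u$, not your normalising constant), and your center $\xi^*=u_{x_0,\rho}+M\,h(x_0)$ need not satisfy this: Lemma~\ref{A-harm} gives no direct control on $|h(x_0)|$. The paper sidesteps this by applying Caccioppoli with center $u_{x_0,2\theta\rho}$ (automatically $\leq\|u\|_\infty$) and slope $P_0=\nu+(1+|\nu|)\gamma\,Dh(x_0)$, observing that $u_{x_0,2\theta\rho}$ is the mean of $u-P_0(x-x_0)$ on $B_{2\theta\rho}$ and invoking the quasi-minimality of the mean to bound $\dashint_{B_{2\theta\rho}}|u-u_{x_0,2\theta\rho}-P_0(x-x_0)|^s\,dx$ by $c(s)\bigl((1+|\nu|)\gamma\bigr)^s\dashint_{B_{2\theta\rho}}|w-h(x_0)-Dh(x_0)(x-x_0)|^s\,dx$.
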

\begin{proof}
We consider $B_\rho(x_0)\Subset\Omega$ and set $\xi=u_{x_0,\rho}, \nu=(Du)_{x_0,\rho},\ell(x)=\xi+\nu(x-x_0)$.
Assume (i), (ii) and (iii) are satisfied and we rescale $u$ as 
\[
 w:=\frac{u-\ell}{(1+\lvert\nu\rvert)\gamma}.
\]
Applying Lemma \ref{A-harm2} on $B_\rho(x_0)$ to $w$ and combining (i), we obtain
\begin{align*}
 &\dashint_{B_\rho(x_0)}\A(Dw,D\varphi)dx \\
 \leq & \left[\omega^{1/2}\left(\lvert\xi\rvert+\lvert\nu\rvert,\sqrt{\Phi(\rho)}\right)
 +\sqrt{\Phi(\rho)}+\frac{\delta}{2} \right]
 \sup_{B_{\rho}(x_0)}\lvert D\varphi\rvert \\
 \leq& \delta\sup_{B_{\rho}(x_0)}\lvert D\varphi\rvert
\end{align*}
for all $\varphi\in C_0^\infty(B_\rho(x_0),\R^N)$.
Moreover, we have, note that $\gamma\geq C_2\sqrt{\Phi(\rho)}$ holds from the definition of $\gamma$,
\begin{align*}
 \dashint_{B_{\rho}(x_0)}\left\{\lvert Dw\rvert^2+\gamma^{q-2}\lvert Dw\rvert^p\right\}dx
 =&\dashint_{B_{\rho}(x_0)}\left\{ \frac{\lvert Du-\nu\rvert^2}{\gamma^2(1+\lvert\nu\rvert)^2}
 +\gamma^{q-2}\frac{\lvert Du-\nu\rvert^q}{\gamma^q(1+\lvert\nu\rvert)^q} \right\} dx \\
 \leq & \frac{\Phi(\rho)}{\gamma^2}
 \leq \frac{1}{{C_2}^2}
 \leq 1.
\end{align*}
Thus, these two inequalities allow us to apply the $\A$-harmonic approximation lemma (Lemma \ref{A-harm}), to
conclude the existence of an $\A$-harmonic function $h$ satisfying
\begin{align}
 \dashint_{B_{\rho/2}(x_0)}&\left\{\left\lvert\frac{w-h}{\rho/2}\right\rvert^2
 +\gamma^{q-2}\left\lvert\frac{w-h}{\rho/2}\right\rvert^q\right\} dx
 \leq\theta^{n+q+2}, \quad \text{and} \\
 \dashint_{B_{\rho/2}(x_0)}&\left\{\lvert Dh\rvert^2
 +\gamma^{q-2}\lvert Dh\rvert^q\right\}dx\leq \tilde{C}, \label{energybound}
\end{align}
where we taken $\varepsilon=\theta^{n+q+2}$. From Theorem \ref{Campanato} and \eqref{energybound} we have
\[
 \sup_{B_{\rho/4}(x_0)}\lvert D^2h\rvert^2\leq 4C_0\tilde{C}\rho^{-2}.
\]
From this we infer the following estimate for $s=2$ as well as for $s=q$,
\[
 \sup_{B_{\rho/4}(x_0)}\lvert D^2h\rvert^s
 \leq c(n,N,\lambda, L, q,s)\rho^{-s}.
\]
For $\theta\in(0,1/8]$, Taylor's theorem applied to $h$ at $x_0$ yields
\[
 \sup_{x\in B_{2\theta\rho}(x_0)}\lvert h(x)-h(x_0)-Dh(x_0)(x-x_0)\rvert^s
 \leq c(n,N,\lambda, L, q,s)\theta^{2s}\rho^s.
\]
We have then
\begin{align*}
 &\gamma^{s-2}(2\theta\rho)^{-s}\dashint_{B_{2\theta\rho}(x_0)}
 \lvert w-h(x_0)-Dh(x_0)(x-x_0)\rvert^sdx \\
 \leq & c(s)\gamma^{s-2}(2\theta\rho)^{-s}\left[\dashint_{B_{2\theta\rho}(x_0)}\lvert w-h\rvert^sdx
 +\dashint_{B_{2\theta\rho}(x_0)}\lvert h(x)-h(x_0)-Dh(x_0)(x-x_0)\rvert^sdx\right]\\
 \leq & c(n,N,\lambda,L,q,s)\theta^2.
\end{align*}
Recall that the mean-value of $u-(\nu+\gamma(1+\lvert\nu\rvert)Dh(x_0))(x-x_0)$ on $B_{2\theta\rho}(x_0)$ is $u_{x_0,2\theta\rho}$, 
we have
\begin{align}
 &(2\theta\rho)^{-s}\dashint_{B_{2\theta\rho}(x_0)}\lvert u-u_{x_0,2\theta\rho}
 -\left(\nu+\gamma(1+\lvert\nu\rvert)Dh(x_0)\right)(x-x_0)\rvert^sdx \notag\\
 \leq&
 c(s)(2\theta\rho)^{-s}\gamma^s(1+\lvert\nu\rvert)^s
 \dashint_{B_{2\theta\rho}(x_0)}\lvert w-h(x_0)-Dh(x_0)(x-x_0)\rvert^sdx \notag\\
 \leq & c(n,N,\lambda,L,q,s)(1+\lvert\nu\rvert)^s\theta^2\gamma^2. \label{scaling}
\end{align}
By assumption (ii), we infer $\sqrt{\Phi(\rho)}\leq \theta^n/2$. This yields
\begin{equation}
 \lvert(Du)_{x_0,\theta\rho}-\nu\rvert
 \leq\theta^{-n}\dashint_{B_{\rho}(x_0)}\lvert Du-\nu\rvert dx 
 \leq \theta^{-n}(1+\lvert\nu\rvert)\sqrt{\Phi(\rho)} 
 \leq \frac{1}{2}(1+\lvert\nu\rvert).
\end{equation}
Thus, combining with the estimate $1+\lvert\nu\rvert\leq1+\lvert(Du)_{x_0,\theta\rho}\rvert
+\lvert(Du)_{x_0,\theta\rho}-\nu\rvert$, we obtain
\begin{equation}
 1+\lvert\nu\rvert\leq 2(1+\lvert(Du)_{x_0,\theta\rho}\rvert). \label{nuesti}
\end{equation}
Set $P_0=\nu+\gamma(1+\lvert\nu\rvert)Dh(x_0)$. Then Theorem \ref{Campanato}, \eqref{energybound} and assumption (ii) imply
\begin{align} 
 \lvert P_0\rvert\leq \lvert\nu\rvert+\lvert\gamma(1+\lvert\nu\rvert)Dh(x_0)\rvert 
 \leq \lvert\nu\rvert+\gamma(1+\lvert\nu\rvert)\sqrt{C_0c(n,q)} 
 \leq \frac{1}{2}+\lvert\nu\rvert. \label{P-esti}
\end{align}
Therefore, combining with \eqref{nuesti}, we have
\begin{equation}
 1+\lvert P_0\rvert\leq 3(1+\lvert(Du)_{x_0,\theta\rho}\rvert).
\end{equation}
Applying Caccioppoli-type inequality (Lemma \ref{caccioppoli}) on $B_{2\theta\rho}(x_0)$ with $\xi=u_{x_0,2\theta\rho}$
and $\nu=P_0$ yields
\begin{align}
 \Phi(\theta\rho) \leq &6^q\Phi(x_0,\theta\rho,P_0) \notag\\
 \leq & 6^qC_1\Bigg[ \dashint_{B_{2\theta\rho}(x_0)}\left\{
 \left\lvert\frac{u-u_{x_0,2\theta\rho}-P_0(x-x_0)}{2\theta\rho(1+\lvert P_0\rvert)}\right\rvert^2+
 \left\lvert\frac{u-u_{x_0,2\theta\rho}-P_0(x-x_0)}{2\theta\rho(1+\lvert P_0\rvert)}\right\rvert^q\right\} dx \notag\\
 &\qquad +G(\lvert u_{x_0,2\theta\rho}\rvert,\lvert P_0\rvert)\eta((2\theta\rho)^2)
 +\left( a\lvert P_0\rvert+b\right)^2 (2\theta\rho)^2 \Bigg]. \label{appliedcaccioppoli}
\end{align}
Using H\"{o}lder's inequality, the Poincar\'{e} inequality and assumption (ii) we have
\begin{align}
 \lvert u_{x_0,2\theta\rho}\rvert \leq &\lvert u_{x_0,\rho}\rvert+\left\lvert\dashint_{B_{2\theta\rho}(x_0)}
  (u-u_{x_0,\rho}-\nu(x-x_0))dx\right\rvert \notag\\
 \leq &\lvert u_{x_0,\rho}\rvert+\left(\dashint_{B_{2\theta\rho}(x_0)}
  \lvert u-u_{x_0,\rho}-\nu(x-x_0)\rvert^2dx\right)^{1/2} \notag\\
 \leq &\lvert u_{x_0,\rho}\rvert+(2\theta)^{-n/2}\left(\dashint_{B_\rho(x_0)}
  \lvert u-u_{x_0,\rho}-\nu(x-x_0)\rvert^2dx\right)^{1/2} \notag\\
 \leq &\lvert u_{x_0,\rho}\rvert+\theta^{-n/2}\sqrt{C_P}(1+\lvert\nu\rvert)\sqrt{\Phi(\rho)} \notag\\
 \leq &\lvert u_{x_0,\rho}\rvert+\theta^{-n/2}\frac{\sqrt{C_P}}{C_2}(1+\lvert\nu\rvert)\gamma \notag\\
 \leq &\lvert u_{x_0,\rho}\rvert+1.
\end{align}
Set $H_0(s,t):=G^2(1+s,1+t)+\{ a(1+t)+b\}^{q_*}$ and using \eqref{P-esti} we obtain
\begin{equation}
 G(\lvert u_{x_0,2\theta\rho}\rvert,\lvert P_0\rvert)\eta((2\theta\rho)^2)
 +\left( a\lvert P_0\rvert+b\right)^2 (2\theta\rho)^2 
 \leq H_0(\lvert\xi\rvert,\lvert\nu\rvert)\eta(\rho^2). \label{Hesti}
\end{equation}
The definition of $\gamma$ and $H_0$ imply
\begin{align}
 \gamma^2 \leq &2{C_2}^2\left[\Phi(\rho)+4\delta^{-2}
 \left\{ G(\lvert\xi\rvert,\lvert\nu\rvert)\sqrt{\eta(\rho^2)}+\rho(a(1+\lvert\nu\rvert)+b)\right\}^2\right] \notag\\
 \leq &2{C_2}^2\left[\Phi(\rho)+ 8\delta^{-2}H_0(\lvert\xi\rvert,\lvert\nu\rvert)\eta(\rho^2)\right] . \label{gammaesti}
\end{align}
Plugging \eqref{scaling}, \eqref{Hesti}, \eqref{gammaesti} into \eqref{appliedcaccioppoli}, we deduce 
\begin{align*}
 \Phi(\theta\rho) \leq &6^qC_1\bigg[ c(n,N,\lambda,L,q)\theta^2\gamma^2 
 +G(\lvert u_{x_0,2\theta\rho}\rvert,\lvert P_0\rvert)\eta((2\theta\rho)^2)
 +\left( a\lvert P_0\rvert+b\right)^2 (2\theta\rho)^2 \bigg] \\
 \leq &6^qC_1\bigg[ c\,\theta^2{C_2}^2\{\Phi(\rho)+\delta^{-2}H_0(\lvert\xi\rvert,\lvert\nu\rvert)\eta(\rho^2)\} 
 +H_0(\lvert\xi\rvert,\lvert\nu\rvert)\eta(\rho^{q_*}) \bigg] \notag\\
 \leq & C_3\bigg[ \theta^2\Phi(\rho)+8\delta^{-2}H_0(\lvert\xi\rvert,\lvert\nu\rvert)\eta(\rho^2)\bigg],
\end{align*}
and this complete the proof.
\end{proof}
For $\sigma\in[\alpha,1)$ we find $\theta\in(0,1/8]$ such that $C_3\theta^2\leq \theta^{2\sigma}/2$.
For $T_0 \geq 1$ there exists $\Phi_0>0$ such that
\begin{align}
 &\omega^{1/2}\left( 2T_0,\sqrt{2\Phi_0}\right)+\sqrt{2\Phi_0}\leq \frac{\delta}{2}, \label{sc1}\\
 &2C_4(1+2T_0)\sqrt{2\Phi_0}\leq \theta^n, \label{sc2}
\end{align}
where $C_4:=C_3\left( 1+\sqrt{C_P}\>\right)$. Note that $\Phi_0<1$. Then choose $0<\rho_0\leq 1$ such that
\begin{align}
 &C_5\sqrt{\eta(\rho_0)}\leq\Phi_0, \label{sc3} \\
 &\frac{(1+2T_0)(1+\sqrt{C_P})}{\theta^{n/2}}
  \sqrt{\frac{C_5\alpha^2\beta^2 \widetilde{F}({\rho_0}^2)}{4(1-\theta^{\alpha\beta})^2}}
  \leq \frac{1}{2}T_0, \label{sc4}
\end{align}
where 
\[
 C_5=C_5(n,N,\lambda,L,q,a,M,\alpha,\sigma,T_0):=\frac{2H(2T_0,2T_0)}{2\theta^{2\alpha}-\theta^{2\sigma}}.
\]
\begin{lem}\label{Iteration}
Assume that for some $T_0\geq 1$ and $B_\rho(x_0)\Subset\Omega$ we have
\begin{enumerate}
\item[(a)] $\lvert u_{x_0,\rho}\rvert+\lvert(Du)_{x_0,\rho}\rvert\leq T_0$,
\item[(b)] $\Phi(\rho)\leq \Phi_0$,
\item[(c)] $\rho\leq \rho_0$.
\end{enumerate}
Then the smallness conditions {\rm (i), (ii)} and {\rm (iii)} are satisfied on $B_{\theta^k\rho}(x_0)$ for $k\in\N\cup\{ 0\}$ in
Lemma \ref{EI}. Moreover, the limit
\[
 \Lambda_{x_0}:= \lim_{k\to\infty}(Du)_{x_0,\theta^k\rho}
\]
exists, and the inequality
\begin{equation}
 \dashint_{B_r(x_0)}\lvert Du-\Lambda_{x_0}\rvert^2dx \leq C_6
  \left[ \left(\frac{r}{\rho}\right)^{2\sigma}\Phi(\rho)+\widetilde{F}(r^2)\right] \label{campanatosemi}
\end{equation}
is valid for $0<r\leq \rho$ with a constant $C_6=C_6(n,N,\lambda,L,q,a(M),M,\alpha,\beta,\sigma,T_0)$.
\end{lem}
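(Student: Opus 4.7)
The plan is to prove the three claims by a single induction on $k$: (I) the smallness conditions (i)--(iii) of Lemma \ref{EI} hold on each $B_{\theta^k\rho}(x_0)$; (II) the sequences $u_{x_0,\theta^k\rho}$ and $(Du)_{x_0,\theta^k\rho}$ stay in a ball of radius $2T_0$; and (III) the excess $\Phi(\theta^k\rho)$ decays essentially like $\theta^{2\sigma k}$ up to a $\widetilde F$-type error. The base case $k=0$ is exactly hypotheses (a)--(c) together with the smallness conditions \eqref{sc1}--\eqref{sc4} imposed on $\Phi_0$ and $\rho_0$.

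For the induction step, assuming (I)--(III) up to index $k$, I would apply Lemma \ref{EI} on $B_{\theta^k\rho}(x_0)$ to get
\[
 \Phi(\theta^{k+1}\rho)\leq \tfrac{1}{2}\theta^{2\sigma}\Phi(\theta^k\rho)+H(2T_0,2T_0)\,\eta((\theta^k\rho)^2),
\]
using the choice $C_3\theta^2\leq\theta^{2\sigma}/2$ and the induction bound on the means. Iterating this recursion, together with $\eta(\theta^{2k}t)\leq\eta(t)\cdot\theta^{2k\alpha}$ from ($\eta$2) and a geometric-series comparison, yields
\[
 \Phi(\theta^k\rho)\leq\theta^{2\sigma k}\Phi(\rho)+C_5\,\eta((\theta^k\rho)^2),
\]
which, by the definition of $\Phi_0$ and $\rho_0$ in \eqref{sc3}, keeps $\Phi(\theta^k\rho)\leq 2\Phi_0$ and so verifies (i) at step $k+1$. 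The condition (iii) is automatic since $\rho_1$ is bounded away from zero once the means stay bounded. The condition (ii) on $\gamma$ follows by combining the decay of $\Phi$ with the summation \eqref{sumeta} of $\sqrt{\eta^\beta}$-terms, controlled by $\sqrt{\widetilde F(\rho^2)}$.

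To control the means, I would estimate
\[
 |(Du)_{x_0,\theta^{k+1}\rho}-(Du)_{x_0,\theta^k\rho}|
  \leq\theta^{-n/2}(1+|(Du)_{x_0,\theta^k\rho}|)\sqrt{\Phi(\theta^k\rho)},
\]
and similarly for $u_{x_0,\theta^k\rho}$ via the Poincaré inequality applied to $u-\ell$ on $B_{\theta^k\rho}(x_0)$. Summing these increments over $k$ and using \eqref{sumeta} produces a geometric bound with total sum controlled by \eqref{sc2} and \eqref{sc4}, so the means stay in $[0,2T_0]$ throughout the iteration. The same summability shows $\{(Du)_{x_0,\theta^k\rho}\}_k$ is Cauchy and defines $\Lambda_{x_0}$, with $|\Lambda_{x_0}-(Du)_{x_0,\theta^k\rho}|^2\lesssim\theta^{2\sigma k}\Phi(\rho)+\widetilde F((\theta^k\rho)^2)$ by \eqref{etaF}.

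For the final Campanato-type estimate \eqref{campanatosemi}, given $0<r\leq\rho$ I would pick $k$ with $\theta^{k+1}\rho<r\leq\theta^k\rho$, dominate $\dashint_{B_r}|Du-\Lambda_{x_0}|^2$ by $\theta^{-n}\dashint_{B_{\theta^k\rho}}|Du-\Lambda_{x_0}|^2$ up to a constant, split via triangle inequality into the excess $\Phi(\theta^k\rho)$ and $|\Lambda_{x_0}-(Du)_{x_0,\theta^k\rho}|^2$, and then rewrite $\theta^{2\sigma k}\sim(r/\rho)^{2\sigma}$ using the choice of $k$. The main obstacle I anticipate is bookkeeping in the induction step for condition (ii): $\gamma$ contains $G\sqrt{\eta(\rho^2)}$ and $\rho(a(1+|(Du)_{x_0,\rho}|)+b)$, so its smallness across all scales has to be forced through the Dini summability \eqref{sumeta} and the $\eta$-monotonicity \eqref{Fnoninc}, rather than following merely from $\Phi(\theta^k\rho)\to 0$.
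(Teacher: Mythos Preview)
Your proposal is correct and follows essentially the same inductive scheme as the paper: iterate Lemma~\ref{EI} to obtain $\Phi(\theta^k\rho)\leq(\tfrac{1}{2}\theta^{2\sigma})^k\Phi(\rho)+C_5\,\eta((\theta^k\rho)^2)$, control the means by telescoping and \eqref{sumeta}, deduce the Cauchy property of $\{(Du)_{x_0,\theta^k\rho}\}$, and interpolate for general $r$ via \eqref{Fnoninc}. Your anticipated ``main obstacle'' for condition (ii) is milder than you fear: since $\theta^k\rho\leq\rho_0$ and the means stay bounded by $2T_0$, the paper bounds $(1+|(Du)_{x_0,\theta^k\rho}|)\gamma(\theta^k\rho)$ directly by $(1+2T_0)\bigl[C_3\sqrt{2\Phi_0}+H(2T_0,2T_0)\sqrt{\eta(\rho_0^2)}\bigr]$ using only \eqref{sc2}, \eqref{sc3} and the monotonicity of $\eta$, with no appeal to the Dini summation \eqref{sumeta} at that step.
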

\begin{proof}
Inductively we shall derive for $k\in\N\cup\{ 0\}$ the following three assertions:
\begin{enumerate}
\item[($\I_k$)] $\Phi(\theta^k\rho)\leq 2\Phi_0$,
\item[($\II_k$)] $\lvert u_{x_0,\theta^k\rho}\rvert+\lvert(Du)_{x_0,\theta^k\rho}\rvert\leq 2T_0$,
\item[($\III_k$)] $\theta^k\rho\leq \rho_1(\lvert u_{x_0,\theta^k\rho}\rvert,\lvert(Du)_{x_0,\theta^k\rho}\rvert)$.
\end{enumerate}
We first note that ($\I_k$), ($\II_k$) and \eqref{sc1} imply the smallness condition $(\text{i}_k)$, i.e. (i) with 
$\theta^k\rho$ instead of $\rho$. Next we observe that ($\I_k$), ($\II_k$), \eqref{sc2} and \eqref{sc3} yield
\begin{align*}
 &(1+\lvert(Du)_{x_0,\theta^k\rho}\rvert)\left(2\sqrt{C_0\tilde{C}}\right)\gamma(\theta^k\rho) \\
 \leq &(1+\lvert(Du)_{x_0,\theta^k\rho}\rvert)\left[C_3\sqrt{2\Phi_0} 
  +H(\lvert u_{x_0,\theta^k\rho}\rvert,\lvert(Du)_{x_0,\theta^k\rho}\rvert)\sqrt{\eta({\rho_0}^2)}\right] \\
 \leq &(1+2T_0)\left[C_3\sqrt{2\Phi_0}+H(2T_0,2T_0)\sqrt{\eta({\rho_0}^2)}\right] \\
 \leq &(1+2T_0)\left[C_3\sqrt{2\Phi_0}+\frac{2\theta^{2\alpha}-\theta^{2\sigma}}{2}\Phi_0\right] \\
 \leq &2C_3(1+2T_0)\sqrt{2\Phi_0} \\
 \leq &1.
\end{align*}
Thus we have $(\text{ii}_k)$. Note that $C_2\left(2\sqrt{C_0\tilde{C}}\right)\leq C_3$ and $\Phi_0> 1$ are hold
from there definition. Finally $(\text{iii}_k)$ is just ($\III_k$). 

By (a), (b) and (c), there holds ($\I_0$),($\II_0$) and ($\III_0$).
Now suppose that we have ($\I_\ell$),($\II_\ell$) and ($\III_\ell$) for $\ell=0,1,\cdots,k-1$ with some $k\in\N$.
Then we can use Lemma \ref{EI} with $\rho,\theta\rho,\cdots,\theta^{k-1}\rho$, and yields
\begin{align*}
 \Phi(\theta^k\rho) 
 \leq &\left(\frac{1}{2}\theta^{2\sigma}\right)^k\Phi(\rho)+\sum_{\ell=0}^{k-1}\left(\frac{1}{2}\theta^{2\sigma}\right)^\ell
 H(\lvert u_{x_0,\theta^{k-1-\ell}\rho}\rvert,\lvert(Du)_{x_0,\theta^{k-1-\ell}}\rvert)\eta((\theta^{k-1-\ell}\rho)^2) \\
 \leq &\left(\frac{1}{2}\theta^{2\sigma}\right)^k\Phi(\rho)+H(2T_0,2T_0)\sum_{\ell=0}^{k-1}
 \left(\frac{1}{2}\theta^{2\sigma}\right)^\ell\eta((\theta^{k-1-\ell}\rho)^2) .
\end{align*}
The nondecreasing property of $t\mapsto t^{-\alpha}\eta(t)$ and the choice of $\sigma$ implies
\begin{align*}
 \sum_{\ell=0}^{k-1}\left(\frac{1}{2}\theta^{2\sigma}\right)^\ell\eta\left((\theta^{k-1-\ell}\rho)^2\right) 
 \leq &\theta^{-2\alpha}\eta\left((\theta^k\rho)^2\right)\sum_{\ell =0}^{k-1}
  \left(\frac{1}{2}\theta^{2\sigma-2\alpha}\right)^\ell \notag\\
 \leq &\frac{2\eta\left((\theta^k\rho)^2\right)}{2\theta^{2\alpha}-\theta^{2\sigma}}.
\end{align*}
Therefore we have
\begin{equation}
 \Phi(\theta^k\rho)\leq \left(\frac{1}{2}\theta^{2\sigma}\right)^k\Phi(\rho)
  +C_5\eta\left((\theta^k\rho)^2\right). \label{iteration}
\end{equation}
Keeping in mind of (b), (c) and the choice of $\rho$, we prove ($\I_k$). We next want to show ($\II_k$).
Using the fact that $\dashint_{B_\rho(x_0)}\nu(x-x_0)dx=0$ holds for all $\nu\in\mathrm{Hom}(\R^n,\R^N)$, 
H\"{o}lder's inequality and the Poincar\'{e} inequality, we obtain
\begin{align*}
 \lvert u_{x_0,\theta^k\rho}\rvert\leq &\lvert u_{x_0,\theta^{k-1}\rho}\rvert+
  \left\lvert\dashint_{B_{\theta^k\rho}(x_0)}(u-u_{x_0,\theta^{k-1}\rho}-(Du)_{x_0,\theta^{k-1}\rho}(x-x_0))dx\right\rvert \\
 \leq &\lvert u_{x_0,\theta^{k-1}\rho}\rvert+
  \theta^{-n/2}\sqrt{C_P}(1+\lvert(Du)_{x_0,\theta^{k-1}\rho}\rvert)\Phi^{1/2}(\theta^{k-1}\rho) \\
 \leq &\lvert u_{x_0,\rho}\rvert+
  \theta^{-n/2}\sqrt{C_P}\sum_{\ell =0}^{k-1}(1+\lvert(Du)_{x_0,\theta^\ell\rho}\rvert)\Phi^{1/2}(\theta^\ell\rho) .
\end{align*}
Similarly we see
\begin{align*}
 \lvert(Du)_{x_0,\theta^k\rho}\rvert \leq &\lvert(Du)_{x_0,\theta^{k-1}\rho}\rvert+
  \left\lvert\dashint_{B_{\theta^k\rho}(x_0)}(Du-(Du)_{x_0,\theta^{k-1}\rho})dx\right\rvert \\
 \leq &\lvert(Du)_{x_0,\rho}\rvert+
 \theta^{-n/2}\sum_{\ell =0}^{k-1}(1+\lvert(Du)_{x_0,\theta^\ell\rho}\rvert)\Phi^{1/2}(\theta^\ell\rho) .
\end{align*}
Combining two estimates and using (\ref{iteration}) and (\ref{sumeta}) we infer 
\begin{align*}
 &\lvert u_{x_0,\theta^k\rho}\rvert+\lvert(Du)_{x_0,\theta^k\rho}\rvert \\
 \leq &\lvert u_{x_0,\rho}\rvert+\lvert(Du)_{x_0,\rho}\rvert
  +\frac{(1+\sqrt{C_P})(1+2T_0)}{\theta^{n/2}}\sum_{\ell =0}^{k-1}\Phi^{1/2}(\theta^\ell\rho) \\
 \leq &\lvert u_{x_0,\rho}\rvert+\lvert(Du)_{x_0,\rho}\rvert 
  +\frac{(1+\sqrt{C_P})(1+2T_0)}{\theta^{n/2}}\sum_{\ell=0}^{k-1}\left\{\left(\frac{1}{\sqrt{2}}\theta^\sigma\right)^\ell\sqrt{\Phi(\rho)}
  +\sqrt{C_5\eta(\theta^{2\ell}\rho^2)}\right\}  \\
 \leq &\lvert u_{x_0,\rho}\rvert+\lvert(Du)_{x_0,\rho}\rvert 
  +\frac{(1+\sqrt{C_P})(1+2T_0)}{\theta^{n/2}}\left\{\frac{\sqrt{2\Phi(\rho)}}{\sqrt{2}-\theta^\sigma}
  +\sqrt{\frac{C_5\alpha^2\beta^2 \widetilde{F}(\rho^2)}{4(1-\theta^{\alpha\beta})^2}}\right\} \\
 \leq & T_0+\frac{(1+\sqrt{C_P})(1+2T_0)}{\theta^{n/2}}\frac{\sqrt{2\Phi_0}}{\sqrt{2}-\theta^\sigma}
  +\frac{(1+\sqrt{C_P})(1+2T_0)}{\theta^{n/2}}
  \sqrt{\frac{C_5\alpha^2\beta^2 \widetilde{F}(\rho^2)}{4(1-\theta^{\alpha\beta})^2}} \\
 \leq & T_0+ \frac{1}{\sqrt{2}-\theta^\sigma}\frac{\theta^{n/2}}{2}
  +\frac{1}{2}T_0 \\
 \leq & 2T_0.
\end{align*}
This proves ($\II_k$).
By hypothesis (c), ($\II_k$), ($\eta$4), the definition of $H$ and \eqref{sc3}, we easily derive 
\begin{align*}
 &(1+\lvert(Du)_{x_0,\theta^k\rho}\rvert)\kappa (\lvert u_{x_0,\theta^k\rho}\rvert+\lvert(Du)_{x_0,\theta^k\rho}\rvert)\theta^k\rho \\
 \leq &H(2T_0,2T_0)\sqrt{\eta(\rho_0)} \\
 \leq &1.
\end{align*}
Thus, we prove ($\III_k$). 

We next want to prove that $(Du)_{x_0,\theta^k\rho}$ converges to some limit $\Lambda_{x_0}$ in 
$\mathrm{Hom}(\R^n,\R^N)$. Arguing as in the proof of ($\II_k$) we deduce for $k>j$
\begin{align}
 \lvert(Du)_{x_0,\theta^k\rho}-(Du)_{x_0,\theta^j\rho}\rvert
 \leq &\sum_{\ell =j+1}^k\lvert(Du)_{x_0,\theta^\ell\rho}-(Du)_{x_0,\theta^{\ell -1}\rho}\rvert \notag\\
 \leq &\sum_{\ell =j+1}^k\theta^{-n/2}(1+\lvert(Du)_{x_0,\theta^{\ell -1}\rho}\rvert)\sqrt{\Phi(\theta^{\ell -1}\rho)} \notag\\
 \leq &\frac{(1+2T_0)\sqrt{\theta^{2\sigma j}\Phi(\rho)}}{\theta^{n/2}(\sqrt{2}-\theta^\sigma)}
  +\frac{(1+2T_0)}{\theta^{n/2}}
  \sqrt{\frac{C_5\alpha^2\beta^2 \widetilde{F}(\theta^{2j}\rho^2)}{4(1-\theta^{\alpha\beta})^2}}. \label{cauchy}
\end{align}
Taking into account our assumption ($\eta$3) we see that 
$\{ (Du)_{x_0,\theta^k\rho}\}_k$ is a Cauchy sequence in $\mathrm{Hom}(\R^n,\R^N)$. Therefore the limit
\[
 \Lambda_{x_0}:=\lim_{k\to\infty}(Du)_{x_0,\theta^k\rho}
\]
exists and from \eqref{cauchy} we infer for $j\in\N\cup\{ 0\}$
\begin{align*}
 \lvert(Du)_{x_0,\theta^j\rho}-\Lambda_{x_0}\rvert 
 \leq &\lvert(Du)_{x_0,\theta^k\rho}-(Du)_{x_0,\theta^j\rho}\rvert+\lvert(Du)_{x_0,\theta^k\rho}-\Lambda_{x_0}\rvert \\
 \to & C_7\sqrt{\theta^{2\sigma j}\Phi(\rho)+\widetilde{F}(\theta^{2j}\rho^2)} \quad (\text{as}\ k\to\infty)
\end{align*}
where
\[
 C_7:=\frac{\sqrt{2}(1+2T_0)}{\theta^{n/2}}\sqrt{\frac{1}{(\sqrt{2}-\theta^\sigma)^2}
 +\frac{C_5\alpha^2\beta^2}{4(1-\theta^{\alpha\beta})^2}} .
\]
Combining this with \eqref{iteration}, and recalling the estimate \eqref{etaF} we arrive at
\begin{align*}
 \dashint_{B_{\theta^j\rho}(x_0)}\lvert Du-\Lambda_{x_0}\rvert^2dx 
 \leq &2(1+2T_0)\Phi(\theta^j\rho)+2\lvert(Du)_{x_0,\theta^j\rho}-\Lambda_{x_0}\rvert^2 \\
 \leq & C_8 \left\{\theta^{2\sigma j}\Phi(\rho)+\widetilde{F}(\theta^{2j}\rho^2)\right\}
\end{align*}
with
\[
 C_8:=2\left\{1+2T_0+{C_7}^2
 +\frac{C_5\alpha^2\beta^2 (1+2T_0)}{4(1-\theta^{\alpha\beta})^2}\right\} .
\]
For $0<r\leq \rho$ we find $j\in\N\cup\{ 0\}$ such that $\theta^{j+1}\rho\leq r \leq \theta^j\rho$. Then using the
above estimate with \eqref{Fnoninc} implies
\begin{align*}
 \dashint_{B_r(x_0)}\lvert Du-\Lambda_{x_0}\rvert^2dx 
 &\leq \theta^{-n}\dashint_{B_{\theta^j\rho}(x_0)}\lvert Du-\Lambda_{x_0}\rvert^2dx \\
 &\leq C_8\theta^{-n}\{\theta^{2\sigma j}\Phi(\rho)+\widetilde{F}(\theta^{2j}\rho^2)\} \\
 &\leq 4C_8\theta^{-n-2\sigma}\left\{\left(\frac{r}{\rho}\right)^{2\sigma}\Phi(\rho)+\widetilde{F}(r^2)\right\} .
\end{align*}
This proves \eqref{campanatosemi} with $C_6:=4C_8\theta^{-n-2\sigma}$. 
\end{proof}
The main theorem (Theorem \ref{pr}) is obtained from Lemma \ref{Iteration} by using standard arguments. \\
\mbox{}\\
{\bf Acknowledgments}\\
The author thanks Professor Hisashi Naito for helpful discussions.

\def\cprime{$'$}
\providecommand{\bysame}{\leavevmode\hbox to3em{\hrulefill}\thinspace}
\providecommand{\MR}{\relax\ifhmode\unskip\space\fi MR }
\providecommand{\MRhref}[2]{%
  \href{http://www.ams.org/mathscinet-getitem?mr=#1}{#2}
}
\providecommand{\href}[2]{#2}

\mbox{}\\
Taku Kanazawa \\
Graduate School of Mathematics \\
Nagoya University \\
Chikusa-ku, Nagoya, 464-8602, JAPAN \\
E-mail:taku.kanazawa@math.nagoya-u.ac.jp
\end{document}